\documentclass[12pt]{amsart}

\usepackage[colorlinks,
linkcolor=blue,
anchorcolor=blue,
citecolor=blue]{hyperref}
\usepackage{epsfig}
\usepackage{graphicx}
\usepackage{amssymb, amstext, amscd, amsmath}
\usepackage{amsthm, mathrsfs, amsfonts,dsfont}
\usepackage{fullpage}
\usepackage{txfonts}
\usepackage{fancybox}
\usepackage{color}
\usepackage{cite}
\usepackage{comment}
\usepackage{tikz-cd} %»æÖÆ½»»»Í¼,Ó³Éä.

\allowdisplaybreaks

\newtheorem{theorem}{Theorem}[section]
\newtheorem{lemma}{Lemma}[section]

\newtheorem{corollary}{Corollary}[section]

\theoremstyle{definition}

\newtheorem{remark}{Remark}[section]

\numberwithin{equation}{section}

\begin{document}
\title {Block Repetition of Numerical Invariants for the Submodules $[z^k-w^k]$ in $H^2(\mathbb D^2)$}
\author{Yin Liu}
\address{School of Mathematics and Statistics, Nanyang Normal University,
Nanyang, Henan, 473061, P. R. China} 
\email{lylight@mail.bnu.edu.cn}

\author{Yufeng Lu}
\address{School of Mathematical Sciences, Dalian University of Technology,
Dalian, Liaoning, 116024, P. R. China}
\email{lyfdlut@dlut.edu.cn}

\author{Chao Zu*}
\address{School of Mathematical Sciences, Dalian University of Technology,
Dalian, Liaoning, 116024, P. R. China}
\email{zuchao@mail.dlut.edu.cn}

\subjclass[2020]{Primary 46E22; Secondary 47B32, 47A13}
\thanks{*Corresponding author.}
%\thanks{This research is supported by National Natural Science Foundation of China
%(No. 12031002).}
\keywords{Hardy module over the bidisk; homogeneous submodule; numerical invariant; core operator; Toeplitz matrix.}

\begin{abstract}
For $k\ge 2$, let $M_k=[z^k-w^k]$ be the principal homogeneous submodule of the Hardy space over the bidisk. We determine Yang's complete sequence of numerical invariants and prove

$$
\Sigma_0(M_k)=\frac{\pi^2}{6},\qquad
\Sigma_j(M_k)=\Sigma_{\lceil j/k\rceil}([z-w]),\quad j\ge1.
$$
The proof exploits a residue-class decomposition of the Toeplitz matrices associated with the graded wandering spaces. Consequently, Yang's monotonicity conjecture holds for the family $\{M_k:k\ge2\}$. We also show that the nonzero spectral data of the core operator are independent of $k$, whereas the numerical invariant sequence recovers $k$ from the length of its constant blocks. Thus the higher numerical invariants detect module-theoretic information invisible to the core spectrum.

\end{abstract}

\maketitle

%\section{Introduction}

%For certain classes of homogeneous submodules, these numerical invariants can be computed explicitly.
%In particular, the case of the submodule generated by the linear polynomial $(z-w)$ has been studied in detail, and explicit formulas for the corresponding invariants are available.
%These computations also verify the monotonicity property of the invariants in this setting.
%
%The purpose of the present paper is to carry out a detailed computation of the numerical invariants for the homogeneous submodule generated by $(z-w)^2$.
%This example represents a natural next step beyond the linear case and requires more involved calculations.
%We obtain explicit expressions for the invariants and verify the monotonicity property in this concrete situation.
%Our results provide a complete analysis of this example and complement the existing literature on numerical invariants of homogeneous submodules.

\section{Introduction}

Let $\mathbb{D}^2 = \{(z, w) \in \mathbb{C}^2 : |z| < 1, |w| < 1\}$ be the unit bidisk, and let $H^2(\mathbb{D}^2)$ denote the Hardy space on $\mathbb{D}^2$, regarded as a Hilbert module over the polynomial ring $\mathbb{C}[z, w]$. A closed subspace $M \subseteq H^2(\mathbb{D}^2)$ is called a \emph{submodule} if it is invariant under multiplication by each of the coordinate functions $z$ and $w$. Thus the Hardy module on the bidisk provides a natural setting for studying commuting shift operators and their invariant subspaces from an operator-theoretic viewpoint.

Unlike the one-variable Hardy space $H^2(\mathbb{D})$, where Beurling’s theorem gives a complete description of all submodules in terms of inner functions \cite{Beu}, the submodule structure of $H^2(\mathbb{D}^2)$ is far more complex. Even for submodules generated by simple polynomials, there is no comparable complete Beurling-type classification. For instance, the submodule $[z - w]$ generated by $z - w$ cannot be expressed as $\theta H^2(\mathbb{D}^2)$ for any two-variable inner function $\theta$. The absence of a canonical model makes the classification and analysis of bidisk submodules a challenging task (see, e.g., \cite{Che,Dou,Rud}).

A productive line of inquiry has been to study submodules of $H^2(\mathbb{D}^2)$ via operator-theoretic invariants derived from the module action. Two natural pairs of operators are associated with any submodule $M$: the compression pair $(S_1, S_2)$ acting on $M^\perp$, which is canonically identified with the quotient module $H^2(\mathbb D^2)/M$, and the restriction pair $(R_1, R_2)$ acting on $M$ itself. These pairs encode structural information about $M$ and have been the subject of extensive investigation \cite{Core,Ya3,Ya1,Ya2,Ya2004,Ya2005,Survey}.

Motivated by the analysis of the core operator associated with $(R_1, R_2)$, R. Yang \cite{Ya3} introduced a sequence of numerical invariants
\[
\{\Sigma_j(M)\}_{j\geq 0}.
\]
Yang first introduced $\Sigma_0$ and $\Sigma_1$ through Hilbert--Schmidt norms of commutators and subsequently defined higher-order invariants using the graded wandering-space bases. These invariants have proven useful for distinguishing submodules and for quantifying fine differences in their operator-theoretic behavior. However, explicit computations are available only in a few special cases. For example, when $M = H^2(\mathbb{D}^2)$, one has $\Sigma_0 = 1$ and $\Sigma_j = 0$ for all $j \geq 1$, whereas for the submodule $[z-w]$, the sequence $\{\Sigma_j\}$ admits a nontrivial explicit expression $\{\frac{1}{6} \pi^2,\frac{1}{6} \pi^2-1,\frac{5}{6} \pi^2-8,\frac{13}{6} \pi^2-\frac{85}{4}, \cdots\}$ \cite{Ya3}.

Motivated by these examples, Yang proposed the following conjecture:

\medskip
\noindent\textbf{Conjecture.}
\emph{For every submodule $M\subset H^2(\mathbb D^2)$, the sequence $\{\Sigma_j(M)\}_{j\geq 0}$ is nonincreasing.}
\medskip

At present, the conjecture remains open in this generality. It is also open for finitely generated polynomial submodules in general. For some homogeneous submodules, partial results have already been obtained; see \cite{Fat, Liu}. In \cite{Liu}, for the submodule $[z-w]$, it was shown that the sequence $\{ \Sigma_j\}$ is given by
\[\Sigma_j=\sum_{m=0}^{\infty} \frac{(m+1)^2}{(m+j)^2(m+j+1)^2},\,\,\,j \geq 1,\]
and $\Sigma_0=\frac{\pi^2}{6}$.

Nevertheless, concrete verifications of the conjecture for explicit, nontrivial examples are still scarce. This paper aims to study the homogeneous polynomial submodule $[z^k-w^k]~~~(k \geq 2)$. The special form of the polynomial \(z^k-w^k\) leads to an explicit
residue-class decomposition of the associated Toeplitz matrices, but the resulting family is not spectrally trivial. The associated Toeplitz matrices split into $k$ tridiagonal chains, and the interaction of these chains with the shifted wandering vectors gives rise to a nontrivial repetition pattern in the higher numerical invariants.

Our first result gives explicit determinant and cofactor formulas for
the Toeplitz matrices associated with \(M_k\). The essential mechanism
is a decomposition into residue classes modulo \(k\), under which the
matrices become direct sums of standard tridiagonal blocks. This
decomposition also allows us to determine the spectrum of the
associated core operator.

Our main result gives a complete description of the numerical invariants
of the submodules \(M_k=[z^k-w^k]\).

\begin{theorem}\label{thm:main}
Let \(M_k=[z^k-w^k]\), where \(k\geq 2\). Then
\[
\Sigma_0(M_k)=\frac{\pi^2}{6},
\qquad
\Sigma_j(M_k)
=
\Sigma_{\left\lceil j/k\right\rceil}([z-w]),
\qquad j\geq 1.
\]
\end{theorem}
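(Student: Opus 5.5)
The plan is to reduce every quantity entering Yang's definition of $\Sigma_j(M_k)$ to the corresponding quantity for $[z-w]$. The tool is the residue-class decomposition of the Gram (Toeplitz) matrices of the graded pieces of $M_k$. After that, the formula for $\Sigma_j([z-w])$ recalled in the introduction from \cite{Liu} finishes the computation.

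\textbf{Step 1: the graded pieces and their Gram matrices.} For $n\ge k$ the degree-$n$ homogeneous part of $M_k$ is $(z^k-w^k)H_{n-k}$, where $H_m$ denotes the homogeneous polynomials of degree $m$. Use the basis $e_i=(z^k-w^k)z^iw^{n-k-i}$, $0\le i\le N-1$, with $N=n-k+1$. Orthogonality of monomials in $H^2(\mathbb D^2)$ gives
\[
\langle e_i,e_l\rangle = 2\delta_{il}-\delta_{i,l+k}-\delta_{i+k,l}.
\]
So the Gram matrix $T_N^{(k)}$ is a banded Toeplitz matrix whose only nonzero off-diagonals sit at distance $k$. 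Permuting indices by residue class $i\equiv r \pmod k$ turns $T_N^{(k)}$ into a direct sum $\bigoplus_{r=0}^{k-1}T_{N_r}$. Here $T_m$ is the standard tridiagonal $(2,-1)$ matrix of size $m$, which is exactly the Gram matrix for $[z-w]$ in degree $m$, and $N_r=\lceil (N-r)/k\rceil$. Consequently $\det T_N^{(k)}=\prod_r (N_r+1)$. The inverse is $(T_m^{-1})_{ab}=\frac{\min(a,b)(m+1-\max(a,b))}{m+1}$ (indices from $1$) on each residue block and zero between blocks. This gives the determinant and cofactor formulas.

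\textbf{Step 2: rewrite each $\Sigma_j$ in terms of the inverse Gram matrices.} I would express $\Sigma_j(M_k)$ as a series over degrees $n$ of traces of the form
\[
\operatorname{tr}\bigl(T^{-1}\,S_j^{*}\,T'\,S_j\bigr)
\]
or squared norms of such products. Here $S_j$ is the index shift $i\mapsto i+j$ (or the multiplication $z^j$, $w^j$ composed with the projection onto the orthogonal complement), coming from the graded wandering bases.

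For $\Sigma_0$ this should reduce, via the trace of the core operator, to $\sum_n \operatorname{tr}(\cdot)$ over blocks. Because the block decomposition merely reorganizes the $[z-w]$ blocks, each block of size $m$ contributes as it does for $[z-w]$. The count of blocks of each size should telescope to give $\pi^2/6$. I would check this by grouping the $k$ consecutive degrees $n$ that yield the same multiset of block sizes.

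\textbf{Step 3: the shift interaction, which I expect to be the main obstacle.} The shift by $j$ sends residue class $r$ to $r+j \bmod k$. Within the block decomposition, an entry $(a,b)$ in block $r$ of degree $n$ is paired with an entry in block $r'$ of degree $n'$. The effective chain shift is $\lfloor (r+j)/k\rfloor$ or $\lceil\cdot\rceil$, depending on $r$.

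My plan is to show the following for each residue $r$. After matching degree $n$ of $M_k$ with degree $m$ of $[z-w]$ through block sizes, the contribution equals the $[z-w]$ summand for $\Sigma_s$, with $s$ the chain displacement. I then need to verify that, after summing over $r$ and over the $k$ degrees sharing block sizes, everything collapses to exactly $\Sigma_{\lceil j/k\rceil}([z-w])$ with no mixing of two shift values. This bookkeeping is delicate: naively one expects a weighted average of $\Sigma_{\lfloor j/k\rfloor}$ and $\Sigma_{\lceil j/k\rceil}$. So I would first test the cases $k=2$, $j=1,2,3$ numerically against the series $\sum_{m\ge0}\frac{(m+1)^2}{(m+s)^2(m+s+1)^2}$. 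This should pin down the exact normalization of Yang's higher invariants, in particular which wandering vectors vanish or restrict the range so that only the ceiling survives. With that settled, I would carry out the general residue computation.
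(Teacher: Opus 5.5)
Your Step 1 is correct and is exactly the paper's Theorems~\ref{t3.1} and~\ref{t3.2}. The gap is in Steps 2--3, which carry the whole content of the theorem. You leave open whether $\lfloor j/k\rfloor$ or $\lceil j/k\rceil$ appears and propose to decide it numerically. Moreover, the framework you set up there does not compute Yang's invariant: traces such as $\operatorname{tr}(T^{-1}S_j^*T'S_j)$ over a whole graded piece, with contributions summed over all $k$ residue chains, are the wrong quantity.

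The missing idea is that the graded wandering spaces are one-dimensional and each wandering vector lives on a single chain. The degree-$(n+k)$ part of $M_k$ is $pH_n$, of dimension $n+1$, while that of $zM_k$ is $zpH_{n-1}$, of dimension $n$. So $M_k\ominus zM_k$ has exactly one unit vector $\phi_n=p\sum_a c_az^aw^{n-a}$ in that degree. Orthogonality to $zpH_{n-1}$ means $A^nc\in\mathbb C e_0$, i.e.\ $c$ is proportional to the first column of $(A^n)^{-1}$. Likewise the coefficient vector of $\psi_n$ is proportional to the last column; this is Lemma~\ref{l3.1}. By your own block decomposition, the first column is supported on the chain $a\equiv 0\pmod k$ and the last column on the chain $b\equiv n\pmod k$. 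Hence $\Sigma_j(M_k)=\sum_n|\langle w^j\phi_n,z^j\psi_n\rangle|^2$ is a sum of single bilinear-form entries, each coupling exactly one pair of chains. The ``weighted average of floor and ceiling'' you fear comes from summing over all chains, i.e.\ from replacing the rank-one wandering projections by $T^{-1}$ on the whole graded piece.

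With this in hand the bookkeeping is forced. Since $\langle pz^aw^{n-a+j},pz^{b+j}w^{n-b}\rangle=\gamma_{j+b-a}$ vanishes unless $j+b-a\in\{0,\pm k\}$, and $a\equiv 0$, $b\equiv n\pmod k$, you get $\alpha_{n,j}=0$ unless $n\equiv -j\pmod k$. So for each $j$ only one arithmetic progression of degrees contributes. Write $n=km+((-j)\bmod k)$. Then both chains have $m+1$ elements, $a=k\mu$ and $b=((-j)\bmod k)+k\nu$ with $0\le\mu,\nu\le m$, and the coupling condition becomes $\mu-\nu-\lceil j/k\rceil\in\{0,\pm1\}$. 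For $j\not\equiv 0$ the chain of $\psi_n$ starts at $k-r$, and this is what turns $q$ into $q+1$. The floor could only appear if $\phi_n$ sat on a nonzero residue class, which it never does.

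Now put $s=\lceil j/k\rceil$. The chain coefficients $u$ of $\phi_n$ satisfy $T_{m+1}u\propto e_1$, so $\sum_\nu v_\nu(2u_{\nu+s}-u_{\nu+s-1}-u_{\nu+s+1})$ collapses to the boundary term $-u_mv_{m+1-s}$. Lemma~\ref{l2.2} then gives $\alpha_{n,j}=-(t+1)/((m+1)(m+2))$ with $m=s+t-1$, $t\ge 0$, which is exactly the $[z-w]$ summand. The paper reaches the same value via \eqref{eq:alpha-unified} and Lemma~\ref{lem:alpha-support-values}.

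The same sparsity settles $\Sigma_0$, and no telescoping of block counts is involved. Since $\phi_n$ lies on residue class $0$ and $\psi_n$ on residue class $n$, $\langle\phi_n,\psi_n\rangle\neq 0$ only for $n=km$, where it equals $A^{km}_{0,km}/D_{km}=1/(m+1)$. Also, $\Sigma_0$ is not the trace of $C$. If you want a spectral route instead, use $\Sigma_0-\Sigma_1=1$ and $\|C\|_{H.S.}^2=\Sigma_0+\Sigma_1$ with Corollary~\ref{c3.1}, giving $\Sigma_0=\frac12(1+\|C\|_{H.S.}^2)=\frac{\pi^2}{6}$.
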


In particular, for every \(m\geq 1\),
\[
\Sigma_{km-k+1}(M_k)
=
\Sigma_{km-k+2}(M_k)
=
\cdots
=
\Sigma_{km}(M_k)
=
\Sigma_m([z-w]).
\]
Thus each numerical invariant of the linear model \([z-w]\) is
repeated in a constant block of length \(k\). Since the sequence
\(\{\Sigma_m([z-w])\}_{m\geq 1}\) is strictly decreasing, it follows
that \(\{\Sigma_j(M_k)\}_{j\geq 0}\) is nonincreasing, with equality
within each block and strict decrease between consecutive blocks.
Consequently, Yang's monotonicity conjecture holds for the entire
family \(\{M_k:k\geq 2\}\).

An additional consequence is that the higher numerical invariants
distinguish the members of the family \(\{M_k\}_{k\geq 2}\).
Indeed, Theorem~\ref{thm:main} gives
\[
\Sigma_1(M_k)
=
\Sigma_2(M_k)
=
\cdots
=
\Sigma_k(M_k)
=
\Sigma_1([z-w]),
\]
whereas
\[
\Sigma_{k+1}(M_k)
=
\Sigma_2([z-w])
<
\Sigma_1([z-w]).
\]
Hence the length of the first constant block is exactly \(k\), and
therefore
\[
k
=
\max\bigl\{
j\geq 1:
\Sigma_j(M_k)=\Sigma_1(M_k)
\bigr\}.
\]
This contrasts with the associated core operators: their nonzero
eigenvalues, together with multiplicities, are independent of \(k\).
More precisely, for every \(k\geq 2\), the nonzero eigenvalues are
\[
\{1\}
\cup
\left\{
\pm\frac{1}{m+1}:m\geq 1
\right\},
\]
with multiplicity one. Thus the spectral data of the core
operator do not determine \(k\), whereas the full numerical invariant
sequence does. Hence the higher numerical invariants retain
module-theoretic information that is not detected by the spectral
data of the core operator.

We emphasize that the family considered here is structurally different
from the submodule \([(z-w)^2]\) studied in \cite{Liu}.
Indeed,
\[
z^k-w^k
=
\prod_{\omega^k=1}(z-\omega w)
\]
has \(k\) distinct linear factors, whereas \((z-w)^2\) has a repeated
factor. This distinction leads to essentially different matrix
structures. For \(z^k-w^k\), the Toeplitz matrices associated with
the graded wandering spaces decompose, according to residue classes
modulo \(k\), into several independent tridiagonal chains. It is this
modulo-\(k\) block decomposition, together with the resulting sparsity
of the cofactors, that produces the block-repetition phenomenon in the
higher numerical invariants. By contrast, the repeated factor in \((z-w)^2\) leads to a
different Gram/Toeplitz structure. Therefore,
the block-repetition formula obtained in the present paper is not a
direct consequence of, or a direct substitution into, the results for
\([(z-w)^2]\).

The remainder of the paper is organized as follows. In Section~\ref{s2}, we
review the basic definitions and preliminary results concerning the
core operator and the numerical invariants of submodules. In
Section~\ref{s3}, we exploit the decomposition into residue classes modulo
\(k\) to compute the determinants and algebraic cofactors of the
associated Toeplitz matrices, and we determine the eigenvalues of the
core operator of \(M_k\). In Section~\ref{s4}, we derive a unified formula for
the inner products
\[
\langle w^j\varphi_n,z^j\psi_n\rangle,
\]
identify their exact nonzero support, and use this formula to prove the
block-repetition theorem and the monotonicity of the complete numerical
invariant sequence. We also show that the full numerical invariant sequence determines
the parameter \(k\), thereby revealing module-theoretic information
that is not encoded in the spectral data of the core operator.

% We also show that the invariant sequence recovers
% the parameter \(k\), thereby exhibiting information that is not visible
% in the spectral data of the core operator.

\section{Preliminaries}\label{s2}

\subsection*{Core operator and numerical invariants}
Let  $K(\lambda,z)=\frac{1}{(1-\overline{\lambda_1}z)(1-\overline{\lambda_2}w)}$ be the reproducing kernel of $H^2(\mathbb{D}^2)$, and denote by $K^M(\lambda,z)$ the reproducing kernel of a submodule $M \subseteq H^2(\mathbb{D}^2)$. The \emph{core function} of $M$ is defined by \[G^M(\lambda,z):=\frac{K^M(\lambda,z)}{K(\lambda,z)}=(1-\overline{\lambda_1}z)(1-\overline{\lambda_2}w)K^M(\lambda,z).\]

The corresponding \emph{core operator} $C^M$ on $H^2(\mathbb{D}^2)$ is given by  
\[ C^M(f)(z):=\int_{\mathbb{T}^2} G^M(\lambda,z) f(\lambda) dm(\lambda),~~~z\in\mathbb{D}^2, \]
where $dm(\lambda)$ is the normalized Lebesgue measure on the torus $\mathbb{T}^2$. When the submodule $M$ is clear from the context, we simply write $C$ for the core operator.

The core function and core operator were introduced by Guo and Yang \cite{Core} and have been extensively studied in \cite{Core,Ya2004,Ya2005}. For every submodule $M \subseteq H^2(\mathbb{D}^2)$, it is known that $G^M(\lambda,\lambda) = 1$ almost everywhere on $\mathbb{T}^2$, and that $C^M$ vanishes on the orthogonal complement $M^\perp = H^2(\mathbb{D}^2) \ominus M$. If $C^M$ is Hilbert–Schmidt, or equivalently if $G^M \in L^2(\mathbb{T}^2 \times \mathbb{T}^2)$, then $M$ is called a Hilbert–Schmidt submodule. The Hilbert–Schmidt property of submodules has been investigated in numerous works; see, e.g., \cite{Luo,Zou,Zu1,Zu2}.

Two fundamental operator pairs associated with a submodule $M \subset H^2(\mathbb{D}^2)$ are $(S_1, S_2) \) and \( (R_1, R_2)$, defined by  
\[S_i f = (I - P) z_i f, \qquad R_i g = z_i g, \qquad i = 1,2,\]
where $f \in H^2(\mathbb{D}^2) \ominus M$, $g \in M$, and $P$ is the orthogonal projection from $H^2(\mathbb{D}^2)$ onto $M$. The pair $(S_1,S_2)$ is a pair of commuting contractions on $H^2(\mathbb{D}^2) \ominus M$, while $(R_1, R_2)$ is a pair of commuting isometries on $M$. These two operator pairs encode a wealth of structural information about $M$ \cite{Core}.

A key relation linking the core operator with $(R_1, R_2)$ is  
\[C^M = I - R_1 R_1^* - R_2 R_2^* + R_1 R_2 R_1^* R_2^*.\]

Moreover, $C^2$ is unitarily equivalent to the diagonal block matrix  
\begin{equation*}
  \left(
    \begin{array}{cc}
      [R_1^*, R_1][R_2^*,R_2][R_1^*,R_1] & 0 \\
      0 & [R_1^*,R_2][R_2^*,R_1] \\
    \end{array}
  \right).
\end{equation*}

Yang introduced two numerical invariants associated with $(R_1, R_2)$ \cite{Ya3}:  
\[ \Sigma_0(M)=\|[R_2^*, R_2][R_1^*,R_1]\|_{H.S.}^2;\,\,\,\,\,\,\,\,\,\Sigma_1(M)=\|[R_1^*, R_2]\|_{H.S.}^2,\]
where $[A,B] = AB - BA$ and $\|\cdot\|_{H.S.}$ denotes the Hilbert–Schmidt norm. When no confusion arises, we write simply $\Sigma_0$ and $\Sigma_1$. If $M$ is Hilbert–Schmidt, then  
$$\Sigma_0 - \Sigma_1=1,~~~\|C\|_{H.S.}^2=\Sigma_0 + \Sigma_1.$$

We shall use the following lemma of Yang.

\begin{lemma}[\cite{Ya3}, Lemma 3.2]\label{l2.1}
If $\{\phi_n:n\geq 0\}$ is an orthonormal basis for $M\ominus zM$ and $\{\psi_n:n\geq 0\}$ is an orthonormal basis for $M\ominus wM$, then
\[ \Sigma_0=\sum_{n=0}^{\infty}|\langle\phi_n, \psi_n \rangle|^2 ,\,\,\,\,\,\,\,\,\Sigma_1=\sum_{n=0}^{\infty}|\langle w\phi_n, z\psi_n \rangle|^2.\]
\end{lemma}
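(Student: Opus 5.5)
The plan is to express both commutators through the Wold decompositions of the isometries $R_1$ and $R_2$, and then to compute Hilbert--Schmidt norms in the bases $\{\phi_n\}$ and $\{\psi_n\}$. I will first prove the basis-free identities
\[
\Sigma_0=\sum_{m,n}|\langle\phi_m,\psi_n\rangle|^2,\qquad \Sigma_1=\sum_{m,n}|\langle w\phi_m,z\psi_n\rangle|^2 .
\]
The lemma is stated with a single index. I read this under the convention in force wherever the lemma is applied: the bases are chosen compatibly with the homogeneous grading, so that $\phi_m$ and $\psi_n$ have different degrees whenever $m\neq n$, both pairings vanish for $m\ne n$, and the double sums collapse to the diagonal ones.

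\emph{Step 1 ($\Sigma_0$).} Since $R_1,R_2$ are isometries,
\[
[R_i^*,R_i]=I-R_iR_i^*,
\]
and this is the orthogonal projection of $M$ onto $M\ominus zM$ for $i=1$ and onto $M\ominus wM$ for $i=2$. Write $P_Z$ and $P_W$ for these projections. Then $\Sigma_0=\|P_WP_Z\|_{H.S.}^2$. Computing this Hilbert--Schmidt norm on the orthonormal basis $\{\phi_m\}$ of $\operatorname{ran}P_Z$, and expanding $P_W\phi_m$ in $\{\psi_n\}$, gives
\[
\Sigma_0=\sum_m\|P_W\phi_m\|^2=\sum_{m,n}|\langle\phi_m,\psi_n\rangle|^2 .
\]

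\emph{Step 2 (localising $A=[R_1^*,R_2]$).} Use the Wold decomposition $M=\bigoplus_{i\ge0}z^i(M\ominus zM)$. For $\phi\in M\ominus zM$ and $i\ge1$ we have
\[
R_1^*R_2z^i\phi=z^{i-1}w\phi=R_2R_1^*z^i\phi,
\]
so $A$ vanishes on $zM$. Since $R_1^*\phi=0$, on $M\ominus zM$ it acts by $A\phi=R_1^*(w\phi)$. Applying the same argument to $A^*=[R_2^*,R_1]$ with the roles of $z$ and $w$ exchanged, $A^*$ vanishes on $wM$. Hence $\operatorname{ran}A\subseteq(\ker A^*)^\perp\subseteq M\ominus wM$, and therefore
\[
A=P_WAP_Z .
\]

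\emph{Step 3 ($\Sigma_1$).} By Step 2,
\[
\|A\|_{H.S.}^2=\sum_{m,n}|\langle A\phi_m,\psi_n\rangle|^2=\sum_{m,n}|\langle R_1^*w\phi_m,\psi_n\rangle|^2 .
\]
Moving $R_1^*$ across the inner product gives $\langle R_1^*w\phi_m,\psi_n\rangle=\langle w\phi_m,z\psi_n\rangle$, which yields the formula for $\Sigma_1$.

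The only delicate point is the bookkeeping in Step 2, namely checking that $A$ is compressed on both sides; everything else is routine. The passage from the double sums to the single-index form stated in the lemma is exactly the orthogonality-of-degrees remark above.
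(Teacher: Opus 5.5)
The paper does not prove this lemma; it quotes it from Yang. Your argument is correct and is the standard one. $[R_1^*,R_1]$ and $[R_2^*,R_2]$ are the projections onto $M\ominus zM$ and $M\ominus wM$. The commutator $[R_1^*,R_2]$ annihilates $zM$; directly, $[R_1^*,R_2](zf)=wf-wf=0$ for $f\in M$, so you do not even need the Wold decomposition. Its adjoint annihilates $wM$. Hence $[R_1^*,R_2]=P_{M\ominus wM}[R_1^*,R_2]P_{M\ominus zM}$, and both Hilbert--Schmidt norms can be read off in the two wandering bases.

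Your caveat about the single-index form is necessary, not cosmetic, because the diagonal sums depend on how the two bases are indexed against each other. For $M=H^2(\mathbb D^2)$, take $\psi_n=z^n$ and $\phi_0=w$, $\phi_1=1$, $\phi_n=w^n$ for $n\ge 2$. Then $\sum_n|\langle\phi_n,\psi_n\rangle|^2=0$, whereas $\Sigma_0=1$. So for arbitrary orthonormal bases only your double-sum identities hold. The statement as written is valid when the cross terms vanish, for example for the bases of Lemma~\ref{l3.1}, where $\phi_n$ and $\psi_n$ are both homogeneous of degree $\deg p+n$. That is exactly how the paper uses it.

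This reduction relies on there being one basis vector per degree. That holds for principal submodules $[p]$, but it can fail for other homogeneous submodules. For $[z^2,w^2]$, the degree-two parts of both wandering spaces are two-dimensional, so "compatible with the grading" alone would not kill all cross terms there.
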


Motivated by the above lemma, Yang also defines the higher-order numerical invariants in \cite{Ya3}:
\[\Sigma_j=\sum_{n=0}^{\infty}|\langle w^j \phi_n, z^j \psi_n \rangle|^2,\,\,\,\,\,\,j \geq 2.\]

To facilitate the subsequent derivation, we present the following important lemma at the end of this section. This lemma follows from Theorem~1 on p.~131 of \cite{Fis}.

\begin{lemma}\label{l2.2}
Let \(T_m\) be the $m\times m$ tridiagonal matrix
\begin{equation*}
 T_m= \left(
    \begin{array}{ccccccc}
      2 & -1 & 0 & \cdots & 0 & 0 & 0\\
       -1 & 2 & -1 & \cdots & 0 & 0 & 0\\
      0 & -1 & 2 & \cdots & 0 & 0 & 0\\
      \vdots & \vdots & \vdots & \ddots & \vdots & \vdots & \vdots\\
      0 & 0 & 0 & \cdots & 2 & -1 & 0\\
      0 & 0 & 0 & \cdots & -1 & 2 & -1\\
      0 & 0 & 0 & \cdots & 0 & -1 & 2\\
    \end{array}
  \right)_{m \times m}.
\end{equation*}
Then for $1\leq a,b\leq m$,
\[
(T_m^{-1})_{a,b}=\frac{\min(a,b)\,(m+1-\max(a,b))}{m+1}. 
\]
\end{lemma}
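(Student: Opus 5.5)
The plan is to verify the claimed formula directly rather than derive it. Let $G$ be the $m\times m$ matrix with entries
\[
G_{a,b}=\frac{\min(a,b)\,(m+1-\max(a,b))}{m+1}.
\]
We check that $T_mG=I_m$. Since $T_m$ is square, a right inverse is automatically the two-sided inverse, so this gives $T_m^{-1}=G$. (The invertibility of $T_m$ also follows independently from $\det T_m=m+1$, by the standard three-term recursion for $\det T_m$; we do not need this.)

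Fix a column index $b$ with $1\le b\le m$ and set $g(a)=G_{a,b}$. Although $G$ is only $m\times m$, the formula makes sense for $a=0$ and $a=m+1$, and both values are zero. We adopt this convention, $g(0)=g(m+1)=0$. With it, every row of $T_mG$, including the boundary rows $a=1$ and $a=m$, can be written uniformly as
\[
(T_mG)_{a,b}=-g(a-1)+2g(a)-g(a+1),\qquad 1\le a\le m.
\]
This is minus the discrete second difference of $g$ at $a$. Keeping this boundary bookkeeping straight is the only delicate point, since the first and last rows of $T_m$ have only two nonzero entries. The convention handles it exactly because the formula vanishes at $a=0$ and $a=m+1$.

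On $0\le a\le b$ we have $g(a)=a(m+1-b)/(m+1)$, and on $b\le a\le m+1$ we have $g(a)=b(m+1-a)/(m+1)$. Both pieces are affine in $a$ and agree at $a=b$. Hence for every $a\neq b$ the three points $a-1,a,a+1$ lie in a single affine piece, and the second difference vanishes, so $(T_mG)_{a,b}=0$.

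At $a=b$ the second difference equals the jump in slope. The slope on the left is $(m+1-b)/(m+1)$ and on the right is $-b/(m+1)$. Therefore
\[
2g(b)-g(b-1)-g(b+1)=\frac{m+1-b}{m+1}+\frac{b}{m+1}=1.
\]
Thus $T_mG=I_m$, which proves the lemma. No step poses a real obstacle; the argument is the discrete Green's function for $-\Delta$ with Dirichlet boundary conditions.
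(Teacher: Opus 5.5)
Your proof is correct, and it takes a different route from the paper. The checks all go through: setting $g(0)=g(m+1)=0$ makes the boundary rows of $T_m$ fit the uniform three-term expression (including $m=1$). For $a\neq b$ the points $a-1,a,a+1$ lie in one affine piece. The slope jump at $a=b$ is $\frac{m+1-b}{m+1}+\frac{b}{m+1}=1$. Finally, $T_mG=I_m$ for square matrices forces $G=T_m^{-1}$.

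The paper gives no argument for this lemma. It quotes the formula from a general result on tridiagonal matrices (Theorem~1, p.~131, of the reference it cites). Separately, in Theorem~\ref{t3.1}, it cites $\det T_m=m+1$ from another source.

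Citing is brief and places $T_m$ inside an existing general theory, but it leaves the formula a black box. Your verification is short and self-contained, and it needs no determinant information: $T_mG=I_m$ already certifies that $T_m$ is invertible. It also explains why the formula holds, namely that $G$ is the discrete Dirichlet Green's function. That makes transparent the feature used in Theorem~\ref{t3.2}: the last column of $T_{q+1}^{-1}$ is linear in the row index, $(T_{q+1}^{-1})_{j+1,q+1}=(j+1)/(q+2)$.
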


\section{Block decomposition for the submodule $M_k$}\label{s3}
Throughout the following sections, empty index ranges are omitted.

The subspaces $M\ominus zM$ and $M\ominus wM$ are sometimes called wandering spaces for the submodule $M$. They capture important information about $M$. In general, explicit orthonormal bases for these wandering spaces are difficult to obtain. For homogeneous submodules, however, the graded orthogonal decomposition makes such computations tractable.

We shall use the following result from \cite{Fat}.

\begin{lemma}[\cite{Fat}, Proposition 2.1]\label{l3.1}
Let $M=[p]$ be the homogeneous submodule generated by a homogeneous polynomial $p$. Then $\{\phi_n:n\geq 0\}$ is an orthonormal basis for $M\ominus zM$ and $\{\psi_n:n\geq 0\}$ is an orthonormal basis for $M\ominus wM$, where
\[\phi_0=\psi_0=\frac{p}{\|p\|},\,\,\,\,\phi_n=\frac{\sum_{j=0}^{n}pA_{0,j}^nz^jw^{n-j}}{\sqrt{D_{n+1}D_n}},\,\,\,\,\psi_n=\frac{\sum_{j=0}^npA_{n,j}^nz^jw^{n-j}}{\sqrt{D_{n+1}D_n}},\,\,\,\,n\geq1.\]
\end{lemma}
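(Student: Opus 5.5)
We would prove the lemma by exploiting the grading of $M=[p]$ by total degree. Let $d=\deg p$ and, for $n\ge 0$, put
\[
M_n=\operatorname{span}\{p\,z^jw^{n-j}:0\le j\le n\},
\]
the homogeneous component of $M$ of degree $d+n$. Since $p$ is homogeneous, $M$ is the closure of the orthogonal direct sum $\bigoplus_{n\ge0}M_n$, because monomials of different total degree are orthogonal in $H^2(\mathbb D^2)$. The spanning set of $M_n$ is linearly independent: a relation $p\,q=0$ with $q$ a nonzero homogeneous polynomial of degree $n$ is impossible. Hence $\dim M_n=n+1$.

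Likewise $zM$ is the closure of $\bigoplus_{n\ge1} zM_{n-1}$, and $zM_{n-1}\subset M_n$ has codimension one, being spanned by $p\,z^jw^{n-j}$ with $1\le j\le n$. Therefore
\[
M\ominus zM=\bigoplus_{n\ge0}\bigl(M_n\ominus zM_{n-1}\bigr),
\]
with $zM_{-1}:=0$, and each summand is one-dimensional. Taking one unit vector in each summand therefore yields an orthonormal basis of $M\ominus zM$. The analogous statement holds for $M\ominus wM$, where $wM_{n-1}$ is spanned by $p\,z^jw^{n-j}$ with $0\le j\le n-1$.

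Next we identify the unit vector. Let $G^{(n+1)}$ be the $(n+1)\times(n+1)$ Gram matrix with entries $\langle p\,z^jw^{n-j},p\,z^iw^{n-i}\rangle$, let $D_{n+1}=\det G^{(n+1)}>0$, and let $A^n_{i,j}$ denote its cofactors; this is how we read the notation of the lemma. On $\mathbb T^2$ the integrand equals $|p|^2(z/w)^{j-i}$, so the entries depend only on $j-i$ and $G^{(n+1)}$ is a Hermitian Toeplitz matrix. We look for $f=\sum_j c_j\,p\,z^jw^{n-j}$ orthogonal to $p\,z^iw^{n-i}$ for $i=1,\dots,n$. This is the linear system $G^{(n+1)}c=\lambda e_0$, whose solution by Cramer's rule is $c_j\propto A^n_{0,j}/D_{n+1}$, up to the conjugation convention forced by Hermitian symmetry; this is trivial when $p$ has real coefficients, as in our case $p=z^k-w^k$.

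The norm is $\|f\|^2=c^*G^{(n+1)}c=\lambda^2(G^{(n+1)})^{-1}_{00}$. We then use the key Toeplitz fact that deleting the first row and column of $G^{(n+1)}$ leaves exactly $G^{(n)}$, so $A^n_{0,0}=D_n$. With the choice $c_j=A^n_{0,j}$ this gives $\|f\|^2=D_{n+1}A^n_{0,0}=D_{n+1}D_n$. Hence $\phi_n=f/\sqrt{D_{n+1}D_n}$, exactly as stated, and for $n=0$ we simply get $\phi_0=p/\|p\|$.

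For $\psi_n$ we repeat the argument with the orthogonality conditions against $j=0,\dots,n-1$. This picks out the last row of cofactors $A^n_{n,j}$, and deleting the last row and column of a Toeplitz matrix again leaves $G^{(n)}$, so $A^n_{n,n}=D_n$ and the same normalization applies.

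The only delicate points are bookkeeping ones. The first is verifying the Toeplitz property, which is what gives both $A^n_{0,0}=A^n_{n,n}=D_n$. The second is fixing the conjugation convention in the cofactor formula. The rest, namely the orthogonal grading and the codimension-one counts, is routine.
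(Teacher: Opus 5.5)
Your proof is correct. The paper gives no proof of its own here (it quotes \cite{Fat}), and your argument is the standard one: the degree grading makes each graded piece of $M\ominus zM$ and $M\ominus wM$ one-dimensional, Cramer's rule is applied to the Toeplitz Gram matrix, and $A^n_{0,0}=A^n_{n,n}=D_n$; this also matches how the paper uses $A^n$ as the Gram matrix of $\{p\,z^jw^{n-j}\}_{j=0}^n$. The conjugation issue you flag does not arise, because the paper's entries are exactly $a_{i,j}=\langle p\,z^jw^{n-j},p\,z^iw^{n-i}\rangle$, so $A^nc=\lambda e_0$ gives $c_j=\lambda A^n_{0,j}/D_{n+1}$ verbatim, for complex $p$ as well.
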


In Lemma \ref{l3.1}, $p$ is a homogeneous polynomial of degree $d$ with
\[p=\sum_{j=0}^{d}c_jz^jw^{d-j},\]
and
\begin{equation*}
 A^n= \left(
    \begin{array}{ccccc}
      \|p\|^2 & \overline{\langle pw,pz \rangle} & \overline{\langle pw^2,pz^2 \rangle} & \cdots & \overline{\langle pw^n,pz^n \rangle} \\
      \langle pw,pz \rangle & \|p\|^2 & \overline{\langle pw,pz \rangle} & \cdots & \overline{\langle pw^{n-1},pz^{n-1} \rangle} \\
      \langle pw^2,pz^2 \rangle & \langle pw,pz \rangle & \|p\|^2 & \cdots & \overline{\langle pw^{n-2},pz^{n-2} \rangle} \\
      \vdots & \vdots & \vdots & \ddots & \vdots \\
      \langle pw^n,pz^n \rangle & \langle pw^{n-1},pz^{n-1} \rangle & \langle pw^{n-2},pz^{n-2} \rangle & \cdots & \|p\|^2 \\
    \end{array}
  \right).
\end{equation*}
Note that $A^n$ is an $(n+1) \times (n+1)$ Toeplitz matrix. For convenience, we set $D_n=\textrm{det} \,\, A^{n-1}$ for $ n\geq 1$ and $D_0=1$. Furthermore, we write $A^n$ as $(a_{i,j})_{i,j=0}^n$, where $a_{i,j}=\langle pw^{i-j},pz^{i-j} \rangle, \,\,\,i,j=0,1,\cdots,n$, and denote its cofactor matrix by $(A_{i,j}^n)_{i,j=0}^n$.

\begin{lemma}[\cite{Fat}, Corollary 2.3]\label{l3.2}
For the homogeneous submodule $[p]$, we have
\[\Sigma_0=\sum_{n=0}^{\infty} \left|\frac{A_{0,n}^n}{D_n} \right|^2.\]
\end{lemma}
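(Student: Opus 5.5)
The plan is to derive the formula directly from Lemma~\ref{l2.1} by substituting the explicit bases of Lemma~\ref{l3.1} into $\Sigma_0=\sum_n|\langle\phi_n,\psi_n\rangle|^2$. Everything then reduces to cofactor (Laplace) expansion identities for the Hermitian Toeplitz matrix $A^n$.

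First, the $n=0$ term. Since $\phi_0=\psi_0=p/\|p\|$, we have $\langle\phi_0,\psi_0\rangle=1$. With the conventions that the cofactor of a $1\times1$ matrix is $1$ and $D_0=1$, this equals $A^0_{0,0}/D_0$.

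For $n\ge1$, expand
\[
\langle\phi_n,\psi_n\rangle=\frac{1}{D_{n+1}D_n}\sum_{i,j=0}^n A^n_{0,i}\,\overline{A^n_{n,j}}\;G_{i,j},\qquad G_{i,j}=\langle p z^iw^{n-i},\,p z^jw^{n-j}\rangle .
\]
The key observation is that $G$ is expressed through the entries of $A^n$. Multiplication by a monomial is an isometry on $H^2(\mathbb D^2)$, so for $i\ge j$ we can cancel the common factor $z^jw^{n-i}$ and get $G_{i,j}=\langle p z^{i-j},p w^{i-j}\rangle=\overline{a_{i,j}}$. Similarly $G_{i,j}=\overline{a_{i,j}}$ for $i<j$, so $G=\overline{A^n}$ entrywise, and $G$ is Hermitian.

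Next, apply the cofactor identities. The identity $\sum_i a_{k,i}A^n_{m,i}=\delta_{km}\det A^n$, together with the Hermitian symmetry of $A^n$, gives
\[
\sum_j \overline{A^n_{n,j}}\,G_{i,j}=\overline{\sum_j a_{i,j}A^n_{n,j}}=\delta_{i,n}\,\overline{D_{n+1}}=\delta_{i,n}D_{n+1},
\]
since $D_{n+1}=\det A^n$ is real. Hence
\[
\langle\phi_n,\psi_n\rangle=\frac{A^n_{0,n}D_{n+1}}{D_{n+1}D_n}=\frac{A^n_{0,n}}{D_n}.
\]
This uses $D_{n+1}\neq0$, which holds because $A^n$ is a Gram matrix of linearly independent vectors and hence positive definite. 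Summing $|\langle\phi_n,\psi_n\rangle|^2$ over $n$ then gives the lemma.

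The main obstacle is the bookkeeping of conjugations and index orders. One has to check which of $G$, $A^n$, $\overline{A^n}$ or $(A^n)^T$ the Gram matrix matches, and whether the expansion picks out $A^n_{0,n}$ or its conjugate $A^n_{n,0}$. I would settle this by checking the case $n=1$ explicitly. In any case the final result only involves the modulus, and Hermitian symmetry gives $A^n_{n,0}=\overline{A^n_{0,n}}$, so any residual conjugation ambiguity does not affect the answer.
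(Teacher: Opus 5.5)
Your argument is correct. The paper does not prove this statement itself; it imports it from \cite{Fat}, so there is no in-paper proof to compare against.

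Your derivation is the natural self-contained one, and it is the $j=0$ instance of the cofactor-contraction argument the paper uses in Section~\ref{s4} to obtain \eqref{eq:alpha-unified}. There, the first-row cofactors $A^n_{0,a}$ are contracted against rows of $A^n$, so the alien-cofactor identity kills every term except a boundary term. You instead contract the last-row cofactors $A^n_{n,j}$ against $G=\overline{A^n}$. This picks out $A^n_{0,n}$ rather than $A^n_{n,0}=\overline{A^n_{0,n}}$, which, as you note, makes no difference to the modulus.

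The individual steps check out:
\begin{itemize}
\item the identification $G_{i,j}=\overline{a_{i,j}}$, using that multiplication by a monomial is an isometry;
\item the identity $\sum_j a_{i,j}A^n_{n,j}=\delta_{i,n}\det A^n$;
\item the positivity of $D_{n+1}$;
\item the $n=0$ term. The general formula already covers it, since $D_1=\|p\|^2$ and $A^0_{0,0}=1$.
\end{itemize}

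One point you should make explicit is why the diagonal sum in Lemma~\ref{l2.1} may be used. In general
\[
\Sigma_0=\|P_{M\ominus wM}P_{M\ominus zM}\|_{H.S.}^2=\sum_{m,n}|\langle\phi_n,\psi_m\rangle|^2,
\]
and this collapses to $\sum_n|\langle\phi_n,\psi_n\rangle|^2$ only for suitably paired bases, not for arbitrary orthonormal bases. It does collapse here, because $\phi_n$ and $\psi_n$ from Lemma~\ref{l3.1} are both homogeneous of degree $\deg p+n$. Homogeneous polynomials of different degrees are orthogonal, so $\langle\phi_n,\psi_m\rangle=0$ for $m\neq n$.
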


\begin{lemma}[\cite{Fat}, Theorem 3.7]\label{l3.3}
For the homogeneous submodule $[p]$, the core operator has eigenvalues
\[0,1,\pm \left(1-\frac{(|D_n|^2-|A_{0,n}^n|^2)^2}{D_{n-1}D_n^2D_{n+1}} \right)^{1/2},\,\,\,\,n \geq 1.\]
\end{lemma}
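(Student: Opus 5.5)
The plan is to compute the core operator one degree at a time, using the grading of the homogeneous submodule $M=[p]$. First, from $C^M=I-R_1R_1^*-R_2R_2^*+R_1R_2R_1^*R_2^*$, the fact that $R_1,R_2$ commute, and $I-R_1R_1^*=P_{M\ominus zM}$, I would rewrite
\[
C^M=P_{M\ominus zM}-R_2P_{M\ominus zM}R_2^*
\]
on $M$, with $C^M=0$ on $M^\perp$.

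Next I would use the grading. Let $p$ have degree $d$, and write $M=\bigoplus_{n\ge0}M_n$, where $M_n=\operatorname{span}\{pz^jw^{n-j}:0\le j\le n\}$ is the homogeneous part of degree $d+n$. Multiplication by $z$ or $w$ raises the degree by one, so both $P_{M\ominus zM}$ and $R_2P_{M\ominus zM}R_2^*$ preserve each $M_n$. Within $M_n$, the space $M\ominus zM$ meets $M_n$ in the line spanned by $\phi_n$ from Lemma \ref{l3.1}. The range of $R_2P_{M\ominus zM}R_2^*$ in $M_n$ is the line spanned by $w\phi_{n-1}$, which is a unit vector because $R_2$ is an isometry. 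Hence on $M_0=\mathbb C p$ the operator $C$ acts as the identity, which gives the eigenvalue $1$. For $n\ge1$, $C|_{M_n}=P_u-P_v$ with unit vectors $u=\phi_n$ and $v=w\phi_{n-1}$. This is a difference of two rank-one projections, so its nonzero eigenvalues are $\pm\sqrt{1-|\langle u,v\rangle|^2}$. All remaining eigenvalues are $0$.

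It remains to compute $c_n=|\langle\phi_n,w\phi_{n-1}\rangle|^2$. The Gram matrix of $\{pz^jw^{n-j}\}_{j=0}^n$ is the Toeplitz matrix $A^n$, since $\langle pz^jw^{n-j},pz^iw^{n-i}\rangle=\langle pw^{i-j},pz^{i-j}\rangle$ after cancelling common monomials; I would check the index and conjugation conventions carefully at this point. The coefficient vector of $\phi_n$ is the first cofactor column $(A^n_{0,j})_j$. By the adjugate identity, $A^n$ applied to this vector equals $D_{n+1}e_0$. The vector $w\phi_{n-1}$ has coefficients $A^{n-1}_{0,j}$ on the indices $j=0,\dots,n-1$ of the same basis. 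Therefore the pairing collapses to its $j=0$ coordinate:
\[
\langle\phi_n,w\phi_{n-1}\rangle=\frac{D_{n+1}\,\overline{A^{n-1}_{0,0}}}{\sqrt{D_{n+1}D_n}\sqrt{D_nD_{n-1}}}.
\]
Since $A^{n-1}_{0,0}$ is the principal minor of size $n-1$ (Toeplitz), it equals $D_{n-1}$. This gives $c_n=D_{n+1}D_{n-1}/D_n^2$.

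The main obstacle is the last step: matching this with the stated formula. For that I need the Desnanot--Jacobi (Sylvester) identity applied to the Hermitian Toeplitz matrix $A^n$, namely
\[
D_{n+1}D_{n-1}=D_n^2-|A^n_{0,n}|^2 .
\]
Here $A^n_{0,0}=A^n_{n,n}=D_n$, the central minor is $D_{n-1}$, and $A^n_{n,0}=\overline{A^n_{0,n}}$ by Hermitian symmetry. Substituting this identity once gives
\[
c_n=\frac{(D_n^2-|A^n_{0,n}|^2)^2}{D_{n-1}D_n^2D_{n+1}},
\]
which is exactly the expression under the square root in Lemma \ref{l3.3}. The delicate points are the sign and conjugation conventions for the cofactors. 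I would verify them against the case $p=z-w$, where the eigenvalues are known.
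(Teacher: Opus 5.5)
The paper does not prove this lemma. It is quoted from \cite{Fat} (Theorem 3.7), so there is no in-paper argument to compare with, and your proposal has to stand as a self-contained proof. It does: the argument is correct.

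The reduction $C=P_{M\ominus zM}-R_2P_{M\ominus zM}R_2^*$ is sound. So is the observation that this operator is reduced by the graded pieces $p\cdot\mathrm{span}\{z^jw^{n-j}\}$ and equals $P_{\phi_n}-P_{w\phi_{n-1}}$ on them, and so is the eigenvalue count for a difference of two rank-one projections. Two routine facts are left implicit and are worth one line each: $M$ is the closed orthogonal sum of these pieces, and a self-adjoint operator reduced by such a decomposition has as eigenvalues exactly the union of those of its blocks.

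The conventions you flagged also check out. With $a_{i,j}=\langle pw^{i-j},pz^{i-j}\rangle$, the matrix $A^n$ is precisely $(\langle pz^jw^{n-j},pz^iw^{n-i}\rangle)_{i,j}$. Hence $A^n$ maps the first-row cofactor vector to $D_{n+1}$ times the first standard basis vector, and $\langle\phi_n,w\phi_{n-1}\rangle=\sqrt{D_{n-1}D_{n+1}}/D_n$. This holds also for $n=1$, where $A^0_{0,0}=1=D_0$.

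The Desnanot--Jacobi step is right as well. We have $A^n_{0,0}=A^n_{n,n}=D_n$, the central minor is $D_{n-1}$, and the cofactor signs $(-1)^n$ of $A^n_{0,n}$ and $A^n_{n,0}$ cancel. Hermitian symmetry gives $A^n_{n,0}=\overline{A^n_{0,n}}$. The case $p=z-w$, with $D_n=n+1$ and $A^n_{0,n}=1$, confirms it.

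Your route also yields a simpler form of the lemma. Applying the same identity once more,
\[
1-\frac{(D_n^2-|A^n_{0,n}|^2)^2}{D_{n-1}D_n^2D_{n+1}}
=1-\frac{D_{n-1}D_{n+1}}{D_n^2}
=\frac{|A^n_{0,n}|^2}{D_n^2},
\]
so the nonzero eigenvalues on the $n$-th piece are just $\pm|A^n_{0,n}|/D_n$.

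With this form, Corollary~\ref{c3.1} follows almost without computation from Theorem~\ref{t3.1} and Remark~\ref{r3.2}. Indeed $A^{km}_{0,km}/D_{km}=1/(m+1)$, while $A^n_{0,n}=0$ for $n\not\equiv 0\pmod k$. Comparing with Lemma~\ref{l3.2} gives $\|C\|_{H.S.}^2=1+2\sum_{n\ge1}|A^n_{0,n}/D_n|^2=2\Sigma_0-1$. This agrees with $\|C\|_{H.S.}^2=\Sigma_0+\Sigma_1$ and $\Sigma_0-\Sigma_1=1$ in the Hilbert--Schmidt case.
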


We now specialize to $p=z^k-w^k~~~(k \geq 2)$. A direct computation gives 
\[ \|p\|^2=2;\,\,\,\,\langle pw^j,pz^j \rangle=0,\,\,j=1,2,\cdots,k-1;\,\,\,\,\langle pw^k,pz^k \rangle=-1;\,\,\,\,\langle pw^i,pz^i \rangle=0,\,\,i \geq k+1;\]
and
\begin{equation*}
 A^n= \left(
    \begin{array}{cccccccccccc}
      2 & 0 & \cdots & 0 & -1 & 0 & \cdots & 0 & 0 & \cdots & 0 & 0\\
       0 & 2 & \cdots & 0 & 0 & -1 & \cdots & 0 & 0 & \cdots & 0 & 0\\
       \vdots & \vdots & \vdots & \vdots & \vdots & \vdots & \vdots & \vdots & \vdots & \vdots & \vdots & \vdots\\
      0 & 0 & \cdots & 2 & 0 & 0 & \cdots & 0 & 0 & \cdots& 0 & 0\\
       -1 & 0 & \cdots & 0 & 2 & 0 & \cdots & 0 & 0 & \cdots & 0 & 0\\
       0 & -1 & \cdots & 0 & 0 & 2 & \cdots & 0 & 0 & \cdots & 0 & 0\\
      \vdots & \vdots & \vdots & \vdots & \vdots & \vdots & \ddots & \vdots & \vdots & \cdots & \vdots & \vdots\\
      0 & 0 & 0 & 0 & 0 & 0 & \cdots & 2 & 0 & \cdots & 0 & -1\\
      0 & 0 & 0 & 0 & 0 & 0 & \cdots & 0 & 2 & \cdots & 0 & 0\\
      \vdots & \vdots & \vdots & \vdots & \vdots & \vdots & \vdots & \vdots & \vdots & \vdots & \vdots & \vdots\\
      0 & 0 & 0 & 0 & 0 & 0 & \cdots & 0 & 0 & \cdots & 2 & 0\\
      0 & 0 & 0 & 0 & 0 & 0 & \cdots & -1 & 0 & \cdots & 0 & 2\\
    \end{array}
  \right)_{(n+1) \times (n+1)}.
\end{equation*}

We first compute the determinants $D_n$.

\begin{theorem}\label{t3.1}
Let \(M_k=[z^k-w^k]\), where \(k\geq2\), and let
\(A^{n-1}\) be the \(n\times n\) Toeplitz matrix associated with
\(z^k-w^k\). Write
\[
n=kq+r,\qquad 0\leq r<k.
\]
Then, after a permutation of rows and columns according to the residue
classes modulo \(k\), the matrix \(A^{n-1}\) is permutation similar to
\[
B_0\oplus B_1\oplus\cdots\oplus B_{k-1},
\]
where
\[
B_0,\ldots,B_{r-1}\cong T_{q+1},
\]
and
\[
B_r,\ldots,B_{k-1}\cong T_q .
\]
Consequently,
\[
\det A^{n-1}
=
(q+2)^r(q+1)^{k-r}.
\]
Here \(T_0\) is regarded as the empty matrix and we adopt the
convention
\[
\det T_0=1.
\]
\end{theorem}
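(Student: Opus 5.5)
The plan is to read off the structure of \(A^{n-1}\) from the inner products just computed. Index rows and columns by \(0,1,\dots,n-1\). Then \(a_{i,j}=\langle pw^{i-j},pz^{i-j}\rangle\), with the convention that negative \(i-j\) gives the conjugate entry. By the computation preceding the theorem, this entry equals \(2\) when \(i=j\), equals \(-1\) when \(|i-j|=k\), and is \(0\) otherwise. All values are real, so conjugation plays no role. Hence \(a_{i,j}\neq 0\) forces \(i\equiv j \pmod k\). Consequently, the coordinate subspaces spanned by the indices in a fixed residue class \(s\in\{0,\dots,k-1\}\) are invariant, and \(A^{n-1}\) is block diagonal after reordering.

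The first step is to list the indices in class \(s\). These are \(s, s+k, s+2k,\dots\), up to the largest one that is at most \(n-1\), so there are \(\lceil (n-s)/k\rceil\) of them. With \(n=kq+r\), this count is \(q+1\) when \(s<r\) and \(q\) when \(r\le s\le k-1\). Take the permutation matrix \(P\) that lists class \(0\) in increasing order, then class \(1\), and so on. Then \(P^{T}A^{n-1}P=B_0\oplus\cdots\oplus B_{k-1}\), where \(B_s\) is the principal submatrix on class \(s\).

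Inside \(B_s\), consecutive listed indices \(s+ik\) and \(s+(i+1)k\) differ by exactly \(k\), so the corresponding entry is \(-1\). Indices two or more steps apart differ by at least \(2k\) and give \(0\), and the diagonal entries are \(2\). So \(B_s\) is exactly \(T_{q+1}\) or \(T_q\), as claimed. When \(q=0\), the classes \(s\ge r\) are empty and match the convention for \(T_0\).

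For the determinant, permutation similarity preserves \(\det\), so \(\det A^{n-1}=\prod_s\det B_s\). It remains to show \(\det T_m=m+1\). Expanding along the last row gives the recursion \(\det T_m=2\det T_{m-1}-\det T_{m-2}\), with \(\det T_0=1\) and \(\det T_1=2\), and induction yields \(m+1\). Alternatively, this follows from Lemma~\ref{l2.2}: the \((m,m)\) entry of \(T_m^{-1}\) is \(m/(m+1)=\det T_{m-1}/\det T_m\), which telescopes. Therefore \(\det A^{n-1}=(q+2)^r(q+1)^{k-r}\).

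There is no real obstacle here. The only points needing care are the off-by-one count of each residue class for \(s<r\) versus \(s\ge r\), and the degenerate case \(q=0\).
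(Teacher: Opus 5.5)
Your proposal is correct and follows the paper's proof: you group the indices by residue class mod $k$, count $q+1$ indices when $s<r$ and $q$ when $s\ge r$, identify the blocks as $T_{q+1}$ or $T_q$, and multiply the block determinants. The only differences are that you explicitly check the tridiagonal structure inside each block, and that you derive $\det T_m=m+1$ yourself (by the three-term recursion, or by telescoping via Lemma~\ref{l2.2}) instead of citing an external source as the paper does.
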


\begin{proof}
Recall that the entries of \(A^{n-1}\) are determined by the
coefficients of the polynomial
\[
z^k-w^k .
\]
More precisely,
\[
A^{n-1}=(a_{ij})_{i,j=0}^{n-1},
\]
where
\[
a_{ij}
=
\begin{cases}
2,& i=j,\\
-1,& |i-j|=k,\\
0,& \text{otherwise}.
\end{cases}
\]
Therefore, \(A^{n-1}\) is a banded Toeplitz matrix whose nonzero
entries occur only when the indices differ by \(0\) or by \(k\).

We now rearrange the indices according to their residue classes modulo
\(k\). Let \(P\) be the permutation matrix corresponding to this
rearrangement, where the indices in each residue class remain in their
natural increasing order. Then
\[
PA^{n-1}P^{*}
\]
is block diagonal, because two indices belonging to different residue
classes modulo \(k\) cannot be connected by a nonzero entry of
\(A^{n-1}\).

The residue classes are
\[
0,1,\ldots,k-1.
\]
Since
\[
n=kq+r,\qquad 0\le r<k,
\]
the residue class \(s\) contains the indices
\[
s,\ s+k,\ s+2k,\ldots
\]
which are smaller than \(n\).

For
\[
0\leq s\leq r-1,
\]
there are exactly \(q+1\) such indices, whereas for
\[
r\leq s\leq k-1,
\]
there are exactly \(q\) such indices.

Hence the corresponding diagonal blocks satisfy
\[
B_s\cong T_{q+1},
\qquad 0\leq s\leq r-1,
\]
and
\[
B_s\cong T_q,
\qquad r\leq s\leq k-1.
\]

Since permutation does not change the determinant, we have
\[
\det A^{n-1}
=
\det(PA^{n-1}P^{*}).
\]
Using the block diagonal decomposition, it follows that
\[
\det A^{n-1}
=
\prod_{s=0}^{k-1}\det B_s .
\]

From p.518 of \cite{Fat}, we know that
\[
\det T_m=m+1.
\]
Therefore,
\[
\begin{aligned}
\det A^{n-1}
&=
(\det T_{q+1})^r(\det T_q)^{k-r}\\
&=
(q+2)^r(q+1)^{k-r}.
\end{aligned}
\]

When \(q=0\), the blocks corresponding to \(T_0\) are empty and,
by convention,
\[
\det T_0=1.
\]
Thus the above formula remains valid for all
\[
n=kq+r,\qquad0\le r<k.
\]

This completes the proof.
\end{proof}

\begin{remark}
For the convenience of subsequent derivations, we now expand the conclusion of Theorem 3.1 into an explicit expression.

If $r=0$, then we have $n=kq$ and $q=\frac{n}{k}$, thus, we get that
\[D_n=(\frac{n}{k}+1)^k=\frac{(n+k)^k}{k^k}.\]

Similarly, we have that if $1 \leq r < k$, then $q=\frac{n-r}{k}$ and
\[D_n=(\frac{n-r}{k}+2)^r(\frac{n-r}{k}+1)^{k-r}=\frac{(n-r+2k)^r \cdot (n-r+k)^{k-r}}{k^k}.\]

In summary, it can be obtained that
\[D_n=\frac{(n-l+2k)^l(n-l+k)^{k-l}}{k^k},~~~~ n \equiv l~(\mathrm{mod}~k),\]
where $l=0,1,2,\cdots,k-1$.
\end{remark}

\vspace{0.2cm}

\rm{We next compute the cofactors in the last row of $A^n$.}

\begin{theorem}\label{t3.2}
For the homogeneous submodule \(M_k=[z^k-w^k]\) with \(k\geq 2\),
write
\[
n=kq+r,\qquad 0\leq r<k.
\]
Then
\[
A^n_{n,l}
=
\begin{cases}
\displaystyle
\frac{(n-r+2k)^r(n-r+k)^{k-r-1}(k+l-r)}
{k^k},
& l\equiv r\pmod{k},\\[3mm]
0,
& \text{otherwise},
\end{cases}
\]
where \(0\leq l\leq n\).
\end{theorem}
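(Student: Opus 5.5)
We compute the cofactors through the inverse matrix rather than by expanding minors directly. Since \(A^n\) is real symmetric and invertible (its determinant is \(D_{n+1}>0\) by Theorem~\ref{t3.1}), the adjugate formula gives
\[
A^n_{n,l}=\det(A^n)\,\bigl((A^n)^{-1}\bigr)_{l,n}=D_{n+1}\,\bigl((A^n)^{-1}\bigr)_{l,n}.
\]
The symmetry of \(A^n\) means we need not track whether the cofactor corresponds to the \((n,l)\) or the \((l,n)\) entry of the inverse. We therefore apply the residue-class decomposition of Theorem~\ref{t3.1} to the \((n+1)\times(n+1)\) matrix \(A^n\). Conjugating by the permutation \(P\) gives a block diagonal matrix, and the inverse of a block diagonal matrix is block diagonal with the blocks inverted. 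Hence \(\bigl((A^n)^{-1}\bigr)_{l,n}=0\) whenever \(l\) and \(n\) lie in different residue classes modulo \(k\). Since \(n\equiv r\pmod k\), this gives the vanishing case \(l\not\equiv r\pmod k\).

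For \(l\equiv r\pmod k\), write \(l=r+pk\) with \(0\le p\le q\). The residue class of \(r\) among the indices \(0,\dots,n\) is \(\{r,r+k,\dots,r+qk=n\}\). It has exactly \(q+1\) elements, and the corresponding block is \(T_{q+1}\). In this block \(l\) occupies position \(p+1\) and \(n\) occupies the last position \(q+1\) (positions counted from \(1\)). Lemma~\ref{l2.2} then gives
\[
\bigl((A^n)^{-1}\bigr)_{l,n}=(T_{q+1}^{-1})_{p+1,q+1}=\frac{(p+1)\,(q+2-(q+1))}{q+2}=\frac{p+1}{q+2}.
\]

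It remains to compute \(D_{n+1}=\det A^n\) via Theorem~\ref{t3.1} applied to \(n+1=kq+(r+1)\). This is the only delicate point, because \(r+1\) may equal \(k\). If \(r+1<k\), then \(D_{n+1}=(q+2)^{r+1}(q+1)^{k-r-1}\), and so
\[
A^n_{n,l}=(q+2)^{r}(q+1)^{k-r-1}(p+1).
\]
If \(r=k-1\), then \(n+1=k(q+1)\), so \(D_{n+1}=(q+2)^k\) and \(A^n_{n,l}=(q+2)^{k-1}(p+1)\). This is the same expression, since the exponent \(k-r-1\) is then \(0\). Finally we substitute
\[
q+2=\frac{n-r+2k}{k},\qquad q+1=\frac{n-r+k}{k},\qquad p+1=\frac{k+l-r}{k}.
\]
The powers of \(k\) in the denominators total \(r+(k-r-1)+1=k\), which yields the stated formula. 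The main obstacle is bookkeeping rather than any conceptual step: one must correctly identify the block size \(q+1\) and the positions of \(l\) and \(n\) within it, and handle the wrap-around case \(r=k-1\) in the determinant.
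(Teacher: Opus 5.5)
Your proposal is correct and follows essentially the same route as the paper. It uses the adjugate identity $A^n_{n,l}=D_{n+1}\bigl((A^n)^{-1}\bigr)_{l,n}$, the residue-class block decomposition to get the vanishing case, and Lemma~\ref{l2.2} on the $T_{q+1}$ block containing $l=r+pk$ and $n=r+qk$. Your separate check of the case $r=k-1$, where $n+1=k(q+1)$ and Theorem~\ref{t3.1} gives $D_{n+1}=(q+2)^k$, covers a detail the paper passes over silently when it writes $D_{n+1}=(q+2)^{r+1}(q+1)^{k-r-1}$, and it confirms that formula in the wrap-around case.
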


\begin{proof}
Recall that \(A^n_{n,l}\) denotes the algebraic cofactor of the
entry in row \(n\) and column \(l\) of the matrix \(A^n\).
Since \(A^n\) is positive definite, it is invertible. By the
adjugate formula,
\[
(A^n)^{-1}
=
\frac{\operatorname{adj}(A^n)}{\det A^n}.
\]
Hence
\begin{equation}\label{3.1}
A^n_{n,l}
=
\det(A^n)\bigl((A^n)^{-1}\bigr)_{l,n}.
\end{equation}

Let \(P\) be the permutation matrix which groups the indices
\(0,1,\ldots,n\) according to their residue classes modulo \(k\),
while preserving the increasing order within each residue class.
As in the proof of Theorem~\ref{t3.1},
\[
PA^nP^*
=
B_0\oplus B_1\oplus\cdots\oplus B_{k-1},
\]
where
\[
B_0,\ldots,B_r\cong T_{q+1},
\qquad
B_{r+1},\ldots,B_{k-1}\cong T_q.
\]
Consequently,
\[
P(A^n)^{-1}P^*
=
B_0^{-1}\oplus B_1^{-1}\oplus\cdots\oplus B_{k-1}^{-1}.
\]

It follows immediately that
\[
\bigl((A^n)^{-1}\bigr)_{l,n}=0
\]
unless \(l\) and \(n\) belong to the same residue class modulo \(k\).
Since
\[
n\equiv r\pmod{k},
\]
we obtain
\[
\bigl((A^n)^{-1}\bigr)_{l,n}=0
\qquad\text{if}\qquad
l\not\equiv r\pmod{k}.
\]
Therefore, by \eqref{3.1},
\[
A^n_{n,l}=0
\qquad\text{if}\qquad
l\not\equiv r\pmod{k}.
\]

Now suppose that
\[
l\equiv r\pmod{k}.
\]
Write
\[
l=r+kj,
\qquad
0\leq j\leq q.
\]
The indices belonging to the residue class \(r\) are
\[
r,\ r+k,\ r+2k,\ \ldots,\ r+qk=n.
\]
Thus, inside the block corresponding to the residue class \(r\),
the index \(l=r+kj\) occupies position \(j+1\), while
\(n=r+qk\) occupies position \(q+1\). This block is precisely
\(T_{q+1}\). Hence Lemma~\ref{l2.2} gives
\[
\bigl((A^n)^{-1}\bigr)_{l,n}
=
\bigl(T_{q+1}^{-1}\bigr)_{j+1,q+1}.
\]
Using
\[
\bigl(T_m^{-1}\bigr)_{a,b}
=
\frac{\min(a,b)\bigl(m+1-\max(a,b)\bigr)}
{m+1},
\]
with
\[
m=q+1,\qquad a=j+1,\qquad b=q+1,
\]
we obtain
\[
\begin{aligned}
\bigl(T_{q+1}^{-1}\bigr)_{j+1,q+1}
&=
\frac{(j+1)\bigl(q+2-(q+1)\bigr)}
{q+2}=
\frac{j+1}{q+2}.
\end{aligned}
\]
Therefore, we have
\begin{equation}\label{3.2}
\bigl((A^n)^{-1}\bigr)_{l,n}=
\frac{j+1}{q+2}.
\end{equation}

On the other hand, by Theorem~\ref{t3.1},
\[
\det A^n=D_{n+1}.
\]
Since
\[
n+1=kq+r+1,
\]
the determinant formula gives
\begin{equation}\label{3.3}
D_{n+1}
=
(q+2)^{r+1}(q+1)^{k-r-1}.
\end{equation}

Combining \eqref{3.1}, \eqref{3.2}, and \eqref{3.3}, we obtain
\begin{equation}\label{3.4}
A^n_{n,l}=
D_{n+1}
\bigl((A^n)^{-1}\bigr)_{l,n}=
(q+2)^{r+1}(q+1)^{k-r-1}\frac{j+1}{q+2}=
(q+2)^r(q+1)^{k-r-1}(j+1).
\end{equation}

Since
\[
l=r+kj,
\]
we have
\[
j+1=\frac{k+l-r}{k}.
\]
Moreover, from \(n=kq+r\),
\[
q+2=\frac{n-r+2k}{k},
\qquad
q+1=\frac{n-r+k}{k}.
\]
Substituting these identities into \eqref{3.4}, we obtain
\[
\begin{aligned}
A^n_{n,l}
&=
\left(\frac{n-r+2k}{k}\right)^r
\left(\frac{n-r+k}{k}\right)^{k-r-1}
\frac{k+l-r}{k}\\
&=
\frac{
(n-r+2k)^r
(n-r+k)^{k-r-1}
(k+l-r)
}{k^k}.
\end{aligned}
\]
Therefore,
\[
A^n_{n,l}
=
\begin{cases}
\displaystyle
\frac{(n-r+2k)^r(n-r+k)^{k-r-1}(k+l-r)}
{k^k},
& l\equiv r\pmod{k},\\[3mm]
0,
& \text{otherwise}.
\end{cases}
\]
This completes the proof.
\end{proof}

\begin{remark}\label{r3.2}
By the centrosymmetry of $A^n$, we have
\begin{align*}
A_{0,j}^n=\left\lbrace
\begin{array}{ll}
\frac{(n-r+2k)^{r}(n-r+k)^{k-r-1}(n-j+k-r)}{k^k},~~ & n \equiv r~(\mathrm{mod}~k)~~~\mathrm{and}~~~j \equiv 0~(\mathrm{mod}~k),\\
0,~~ & \mathrm{otherwise},
\end{array}
\right.
\end{align*}
where $0\leq j\leq n, 0\leq r<k$, and $A_{0,j}^n$ represents the algebraic cofactor of the element in row 0 and column $j$.

\end{remark}

In general, determining the eigenvalues of core operators associated with arbitrary submodules is difficult. However, combining Lemma \ref{l3.3} with the preceding results, we can compute the eigenvalues of the core operator for the submodule $M_k$.

\begin{corollary}\label{c3.1}
Let $M_k=[z^k-w^k]$ with $k\geq 2$, and let $C$ be its core operator. The eigenvalues of the core operator $C$ are  
\[
\{0,1\}\cup \left\{\pm \frac{1}{m+1}: m\geq 1\right\},
\]  
where $0$ has infinite multiplicity, $1$ has multiplicity one, and each $\pm1/(m+1)$ has multiplicity one.

\end{corollary}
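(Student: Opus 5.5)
The plan is to apply Lemma~\ref{l3.3} directly. Everything it needs is already explicit: $D_n$ comes from Theorem~\ref{t3.1} (or the remark following it), and $A^n_{0,n}$ from Remark~\ref{r3.2}. For each $n\ge1$ I would evaluate
\[
\rho_n:=\frac{(|D_n|^2-|A^n_{0,n}|^2)^2}{D_{n-1}D_n^2D_{n+1}}
\]
and read off the eigenvalues $\pm(1-\rho_n)^{1/2}$. By Remark~\ref{r3.2} with $j=n$, the cofactor $A^n_{0,n}$ vanishes unless $n\equiv0\pmod k$. So I split into two cases.

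\emph{Case $n=kq+r$ with $1\le r\le k-1$.} Here $A^n_{0,n}=0$, so $\rho_n=D_n^2/(D_{n-1}D_{n+1})$. Theorem~\ref{t3.1} gives
\[
D_n=(q+2)^r(q+1)^{k-r},\quad D_{n-1}=(q+2)^{r-1}(q+1)^{k-r+1},\quad D_{n+1}=(q+2)^{r+1}(q+1)^{k-r-1}.
\]
The formula for $D_{n+1}$ still holds when $r=k-1$, since then $n+1=k(q+1)$ and $D_{n+1}=(q+2)^k$. Hence $D_{n-1}D_{n+1}=D_n^2$, so $\rho_n=1$ and these $n$ contribute only the eigenvalue $0$.

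\emph{Case $n=kq$ with $q\ge1$.} Remark~\ref{r3.2} with $r=0$ and $j=n$ gives $A^n_{0,n}=k(n+k)^{k-1}/k^k=(q+1)^{k-1}$. Theorem~\ref{t3.1} gives
\[
D_n=(q+1)^k,\qquad D_{n-1}=q(q+1)^{k-1},\qquad D_{n+1}=(q+2)(q+1)^{k-1}.
\]
Therefore $D_n^2-(A^n_{0,n})^2=(q+1)^{2k-2}q(q+2)$ and $D_{n-1}D_n^2D_{n+1}=(q+1)^{4k-2}q(q+2)$. It follows that
\[
\rho_n=\frac{q(q+2)}{(q+1)^2}=1-\frac{1}{(q+1)^2},
\]
so the eigenvalues contributed are $\pm\frac{1}{q+1}$. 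As $q$ runs over $q\ge1$ (setting $m=q$), this gives exactly $\{\pm 1/(m+1):m\ge1\}$, each attained at the single index $n=km$.

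The eigenvalue $1$ and the eigenvalue $0$ are taken directly from the list in Lemma~\ref{l3.3}; the latter also occurs because $C$ vanishes on $M^\perp$, which is infinite dimensional. In the same way, the indices $n\not\equiv0\pmod k$ add further zeros.

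The main obstacle I expect is the multiplicity bookkeeping rather than the algebra. One must know that, in the proof of Lemma~\ref{l3.3} in \cite{Fat}, each index $n$ contributes exactly one $\pm$ pair, coming from the $2\times2$ compression of $C$ onto the span of the relevant graded wandering vectors, and that $1$ comes only from $\phi_0=\psi_0$. Granting this, the multiplicity-one claim for $1$ and for each $\pm1/(m+1)$ follows. The reason is that the map $n=km\mapsto m$ is injective and the values $1/(m+1)$ are pairwise distinct and different from $1$ and $0$. I would state this reliance on the graded structure explicitly, and then conclude.
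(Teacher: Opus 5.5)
Your proposal is correct and follows the paper's proof essentially step for step. Like the paper, you apply Lemma~\ref{l3.3} and split into two cases. For $n\not\equiv 0 \pmod k$, $A^n_{0,n}=0$ and $D_{n-1}D_{n+1}=D_n^2$ give only the eigenvalue $0$. For $n=km$, you get $\pm\frac{1}{m+1}$ from the same values of $D_{n-1},D_n,D_{n+1},A^n_{0,n}$. Your multiplicity count via the single index $n=km$ is also the paper's.

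The only minor difference concerns the simplicity of the eigenvalue $1$. The paper identifies its eigenspace as $(M_k\ominus zM_k)\cap(M_k\ominus wM_k)=\mathbb{C}(z^k-w^k)$, whereas you appeal to the graded $2\times 2$ structure behind Lemma~\ref{l3.3}.
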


%Corollary 5.1. The core operators associated with the submodules $M_k=[z^k-w^k], k≥2$, are mutually unitarily equivalent.

\begin{proof}
By Lemma \ref{l3.3}, we know the core operator of submodule $[z^k-w^k]$ has eigenvalues
\[0,1,\pm \left(1-\frac{(|D_n|^2-|A_{0,n}^n|^2)^2}{D_{n-1}D_n^2D_{n+1}} \right)^{1/2},\,\,\,\,n \geq 1.\]

If $n=km, m \geq 1$, 
\[D_n=(m+1)^k,\,\,\,A_{0,n}^n=(m+1)^{k-1},\,\,\,D_{n-1}=m(m+1)^{k-1},\,\,\,D_{n+1}=(m+2)(m+1)^{k-1},\]
then
\[|D_n|^2-|A_{0,n}^n|^2=(D_n+A_{0,n}^n)(D_n-A_{0,n}^n)=m(m+2)(m+1)^{2k-2},\]
and
\[D_{n-1}D_n^2D_{n+1}=m(m+2)(m+1)^{4k-2}.\]

Therefore,
\[1-\frac{(|D_n|^2-|A_{0,n}^n|^2)^2}{D_{n-1}D_n^2D_{n+1}} =\frac{1}{(m+1)^2},\]
and the core operator has eigenvalues
\[\pm\frac{1}{m+1},\,\,\,m \geq 1.\]

If $n=km+l, 1 \leq l \leq k-1, m \geq 0$, then
\[D_n=(m+2)^l(m+1)^{k-l},\,\,\,A_{0,n}^n=0,\,\,\,D_{n-1}=(m+2)^{l-1}(m+1)^{k-l+1},\,\,\,D_{n+1}=(m+2)^{l+1}(m+1)^{k-l-1}.\]

Hence, it follows that
\[|D_n|^2-|A_{0,n}^n|^2=(D_n+A_{0,n}^n)(D_n-A_{0,n}^n)=(m+1)^{2k-2l}(m+2)^{2l};\]
\[D_{n-1}D_n^2D_{n+1}=(m+1)^{4k-4l}(m+2)^{4l}.\]

Therefore,
\[1-\frac{(|D_n|^2-|A_{0,n}^n|^2)^2}{D_{n-1}D_n^2D_{n+1}} =0,\]
and the corresponding eigenvalue is $0$.

In conclusion, for submodule $[z^k-w^k]$, the core operator has eigenvalues
\[0,\,\,\,1,\,\,\,\pm\frac{1}{m+1},\,\,\,m \geq 1.\]

For each $m\ge1$, the corresponding eigenspace is one-dimensional, since it arises from a single $n=km$ in Lemma \ref{l3.3}. The eigenvalue \(0\) has infinite multiplicity since
\(C\) vanishes on \(M_k^\perp\), which is infinite-dimensional, and
additional zero eigenvalues arise from the indices
\(n\not\equiv0\pmod k\). Since the eigenspace corresponding to the eigenvalue 1 is $(M_k\ominus zM_k)\bigcap (M_k\ominus wM_k)=\mathbb{C}(z^k-w^k)$,  the eigenvalue $1$ is simple.

% Using the result in~\cite{Zu2,Zou}, we obtain that the eigenvalue $1$ has multiplicity one:
% \[
% \begin{aligned}
% \operatorname{mult}(1)
% &=
% \|C\|_{H.S.}^2
% -
% 2\sum_{m=1}^{\infty}\frac1{(m+1)^2}\\
% &=
% \left(\frac{\pi^2}{3}-1\right)
% -
% 2\left(\frac{\pi^2}{6}-1\right)
% =1.
% \end{aligned}
% \]

\end{proof}

By Proposition 4.1 of \cite{Fat}, the second largest eigenvalue of the core operator is $\|[R_2^*, R_1]\|$. Therefore, for the submodule $M_k$, we have $\|[R_2^*, R_1]\|=\frac{1}{2}$. 

% Since the core operator associated with a homogeneous submodule is
% compact and self-adjoint, every nonzero point of its spectrum is an
% eigenvalue of finite multiplicity, and zero is the only possible
% accumulation point of the spectrum. By the preceding computation, the
% nonzero eigenvalues of $C$ are
% \[
% 1
% \quad\text{and}\quad
% \pm\frac{1}{m+1},\qquad m\geq 1,
% \]
% where $\sigma_p(C)$ denotes the point spectrum of $C$.

% Moreover, $0$ is an eigenvalue of infinite multiplicity. Therefore,
% there are no additional spectral points, and the full spectrum of $C$
% coincides with its point spectrum:
% \[
% \sigma(C)=\sigma_p(C)
% =
% \{0,1\}\cup
% \left\{
% \pm\frac{1}{m+1}:m\geq1
% \right\}.
% \]

% \begin{remark}
% When $k=1$, from p.517-518 of \cite{Fat}, we know that $D_n=n+1$ and $A_{0,n}^n=1$, therefore, we obtain
% \[1-\frac{(|D_n|^2-|A_{0,n}^n|^2)^2}{D_{n-1}D_n^2D_{n+1}} =\frac{1}{(n+1)^2}.\]

% Thus, for submodule $[z-w]$, the core operator has eigenvalues
% \[0,\,\,\,1,\,\,\,\pm\frac{1}{n+1},\,\,\,n \geq 1.\]
% \end{remark}

\section{Numerical invariants and monotonicity}\label{s4}
In this section, we investigate the numerical invariants and monotonicity of the submodules $M_k=[z^k-w^k]$ for $k\geq 2$. 

Throughout this section, set \(p=z^k-w^k\) and let
\(\{\varphi_n\}_{n\geq 0}\) and \(\{\psi_n\}_{n\geq 0}\) be the
orthonormal bases of \(M_k\ominus zM_k\) and \(M_k\ominus wM_k\),
respectively, given in Lemma~\ref{l3.1}.

We continue to use \(A^n_{i,j}\) to denote the algebraic cofactor of
the entry in row \(i\) and column \(j\) of the matrix \(A^n\).
Throughout this section, whenever one of the indices of \(A^n_{i,j}\)
lies outside \(\{0,1,\ldots,n\}\), the corresponding term is understood
to be zero.

For \(j\geq 1\), define
\[
\alpha_{n,j}:=\langle w^j\varphi_n,z^j\psi_n\rangle.
\]

We first derive a formula for \(\alpha_{n,j}\) that is valid for every
\(j\geq 1\). By Lemma~\ref{l3.1},
\[
\varphi_n
=
\frac{\displaystyle\sum_{a=0}^n
pA^n_{0,a}z^aw^{n-a}}
{\sqrt{D_nD_{n+1}}},
\qquad
\psi_n
=
\frac{\displaystyle\sum_{b=0}^n
pA^n_{n,b}z^bw^{n-b}}
{\sqrt{D_nD_{n+1}}}.
\]

Introduce the Toeplitz coefficients
\[
\gamma_t=
\begin{cases}
2, & t=0,\\
-1, & t=\pm k,\\
0, & \text{otherwise}.
\end{cases}
\]
Since \(p=z^k-w^k\), a direct calculation gives
\[
\left\langle
p z^aw^{n-a+j},
p z^{b+j}w^{n-b}
\right\rangle
=
\gamma_{j+b-a}.
\]
Consequently,
\begin{equation}\label{4.1}
\alpha_{n,j}
=
\frac{1}{D_nD_{n+1}}
\sum_{a,b=0}^n
A^n_{0,a}\gamma_{j+b-a}A^n_{n,b}.
\end{equation}

For fixed \(b\), put \(\ell=j+b\) and consider
\[
X_{\ell}^{(n)}
:=
\sum_{a=0}^n A^n_{0,a}\gamma_{\ell-a}.
\]
If \(1\leq \ell\leq n\), then
\((\gamma_{\ell-a})_{a=0}^n\) is the \(\ell\)-th row of \(A^n\).
Hence the cofactor expansion theorem gives
\[
X_{\ell}^{(n)}=0.
\]

If \(\ell=n+1\), then expanding the minor defining the cofactor \(A^{n+1}_{0,n+1}\) along its last row gives
\[
X_{n+1}^{(n)}=-A^{n+1}_{0,n+1}.
\]

If \(\ell=n+s\), where \(2\leq s\leq k\), then the only possible
nonzero Toeplitz coefficient is
\(\gamma_k=-1\), corresponding to \(a=n-k+s\). Therefore,
\[
X_{n+s}^{(n)}=-A^n_{0,n-k+s}.
\]

Finally, \(X_{\ell}^{(n)}=0\) whenever \(\ell\geq n+k+1\).
Substitution into \eqref{4.1} yields the unified
identity
\begin{equation}
\boxed{
\alpha_{n,j}
=
-\frac{1}{D_nD_{n+1}}
\left(
A^{n+1}_{0,n+1}A^n_{n,n-j+1}
+
\sum_{s=2}^{k}
A^n_{0,n-k+s}A^n_{n,n-j+s}
\right).
}
\label{eq:alpha-unified}
\end{equation}

% The convention on out-of-range indices makes
% \eqref{eq:alpha-unified} valid for all \(n\geq 0\) and \(j\geq 1\).
The unified formula (4.2) covers all cases of \(j\geq1\). 
When \(2\le j\le k\), some terms vanish automatically because the
corresponding cofactor indices fall outside the admissible range.
Therefore, no separate formula is required.
% In particular, when \(2\leq j\leq k\), the terms with \(s>j\) vanish,
% and \eqref{eq:alpha-unified} reduces to
% \[
% \alpha_{n,j}
% =
% -\frac{1}{D_nD_{n+1}}
% \left(
% A^{n+1}_{0,n+1}A^n_{n,n-j+1}
% +
% \sum_{s=2}^{j}
% A^n_{0,n-k+s}A^n_{n,n-j+s}
% \right).
% \]

We now determine the exact support and values of the inner products
\(\alpha_{n,j}\).

\begin{lemma}
\label{lem:alpha-support-values}
Let \(j\geq 1\), and write
\[
j=kq+r,\qquad 0\leq r<k.
\]
Then the following assertions hold.

\begin{enumerate}
\item If \(r=0\), then \(q\geq 1\), and
\[
\alpha_{k(q+t-1),j}
=
-\frac{t+1}{(q+t)(q+t+1)},
\qquad t\geq 0.
\]
Moreover, \(\alpha_{n,j}=0\) for all other \(n\).

\item If \(1\leq r<k\), then
\[
\alpha_{k(q+t+1)-r,j}
=
-\frac{t+1}{(q+t+1)(q+t+2)},
\qquad t\geq 0.
\]
Moreover, \(\alpha_{n,j}=0\) for all other \(n\).
\end{enumerate}
\end{lemma}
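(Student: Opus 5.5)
The plan is to read everything off the unified identity \eqref{eq:alpha-unified}, using only the explicit cofactors from Theorem~\ref{t3.2}, Remark~\ref{r3.2} and the determinant formula of Theorem~\ref{t3.1}. Fix \(j=kq+r\) and write \(n=km+\rho\) with \(0\le\rho<k\).

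The first step is a pure residue-class analysis that identifies which of the \(k\) summands in \eqref{eq:alpha-unified} can be nonzero.
\begin{itemize}
\item By Remark~\ref{r3.2}, \(A^{n+1}_{0,n+1}\neq0\) forces \(n+1\equiv0\pmod k\), that is, \(\rho=k-1\).
\item By Theorem~\ref{t3.2}, \(A^n_{n,n-j+1}\neq0\) forces \(n-j+1\equiv\rho\), that is, \(j\equiv1\pmod k\).
\item In the \(s\)-th summand, \(A^n_{0,n-k+s}\neq0\) forces \(n+s\equiv0\), i.e.\ \(s\equiv-\rho\), while \(A^n_{n,n-j+s}\neq0\) forces \(s\equiv j\equiv r\).
\end{itemize}
Since \(2\le s\le k\) runs over a complete residue system except \(1\), at most one summand survives. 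In every case a nonzero contribution requires \(\rho\equiv-r\pmod k\). This gives three situations:
\begin{itemize}
\item if \(r=0\), then \(\rho=0\) and only \(s=k\) survives;
\item if \(r=1\), then \(\rho=k-1\) and only the first term survives;
\item if \(2\le r<k\), then \(\rho=k-r\) and only \(s=r\) survives.
\end{itemize}
Hence \(\alpha_{n,j}=0\) unless \(n\equiv -r\pmod k\).

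Next I impose the range convention \(0\le n-j+s\) (respectively \(n-j+1\ge0\)) on the surviving cofactor \(A^n_{n,\cdot}\). This constraint cuts the admissible \(n\) down to exactly the lists in the statement:
\begin{itemize}
\item for \(r=0\) it reads \(n\ge j-k=k(q-1)\), i.e.\ \(n=k(q+t-1)\) with \(t\ge0\);
\item for \(r\ge1\) it reads \(n\ge j-r=kq-r+k\)-type bounds, i.e.\ \(n=k(q+t+1)-r\) with \(t\ge0\).
\end{itemize}
In each case, put \(m=\lfloor n/k\rfloor\), so that \(m=q+t-1\) in case (1) and \(m=q+t\) in case (2).

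Then I substitute the explicit values. As a representative case, take \(2\le r<k\) with \(\rho=k-r\).
\begin{itemize}
\item Remark~\ref{r3.2} gives \(A^n_{0,km}=(m+2)^{\rho}(m+1)^{k-\rho-1}\cdot\frac{2k-r-\rho}{k}=(m+2)^\rho(m+1)^{k-\rho-1}\).
\item Theorem~\ref{t3.2} with column \(l=n-j+r\) gives \(l-\rho=k(m-q)=kt\), hence \(A^n_{n,l}=(m+2)^\rho(m+1)^{k-\rho-1}(t+1)\).
\item Theorem~\ref{t3.1} gives \(D_nD_{n+1}=(m+2)^{2\rho+1}(m+1)^{2k-2\rho-1}\).
\end{itemize}
The ratio, with the minus sign in \eqref{eq:alpha-unified}, is \(-(t+1)/((m+1)(m+2))\). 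With \(m=q+t\) this is the claimed value. The cases \(r=1\) and \(r=0\) go the same way:
\begin{itemize}
\item for \(r=1\), use \(A^{n+1}_{0,n+1}=(m+2)^{k-1}\), obtained from Remark~\ref{r3.2} applied to \(A^{n+1}\) with \(n+1=k(m+1)\);
\item for \(r=0\), use \(s=k\) and \(\rho=0\), where \(D_nD_{n+1}=(m+1)^{2k-1}(m+2)\) and the numerator is \((m+1)^{2k-2}(t+1)\). This gives \(-(t+1)/((m+1)(m+2))\) with \(m+1=q+t\).
\end{itemize}
Finally I note \(q\ge1\) when \(r=0\), since \(j\ge1\).

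The main obstacle is the index bookkeeping rather than any conceptual step. This includes making sure the congruence conditions for the two cofactors in each product are simultaneously satisfiable for only one \(s\), and handling the degenerate boundary cases correctly. These boundary cases are \(j\le k\), where \(q=0\) and some indices leave \(\{0,\dots,n\}\), and \(r=1\), where the special term \(A^{n+1}_{0,n+1}\) replaces the \(s\)-sum. I would check these boundary cases explicitly against the range convention before doing the algebra.
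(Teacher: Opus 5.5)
Your proposal is correct and follows essentially the same route as the paper: residue-class sparsity of the cofactors (Theorem~\ref{t3.2}, Remark~\ref{r3.2}) leaves at most one surviving term in \eqref{eq:alpha-unified}, and the out-of-range convention pins down the admissible $n$. Explicit substitution of the cofactors and of $D_nD_{n+1}$ from Theorem~\ref{t3.1}, in the three cases $r=0$, $r=1$ and $2\le r<k$, then gives the stated values. One small slip in the write-up: for $r\ge 1$ the range condition reads $n\ge j-r=kq$, not $kq-r+k$; only after combining it with $n\equiv -r\pmod{k}$ do you get $n\ge k(q+1)-r$, i.e.\ $n=k(q+t+1)-r$ with $t\ge 0$, which is the list you correctly state.
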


\begin{proof}
By Theorem~\ref{t3.2} and Remark~\ref{r3.2}, the cofactors satisfy
\begin{equation}
A^n_{n,l}\neq 0
\quad\Longrightarrow\quad
l\equiv n\pmod{k},
\label{eq:last-row-sparsity}
\end{equation}
and
\begin{equation}
A^n_{0,l}\neq 0
\quad\Longrightarrow\quad
l\equiv 0\pmod{k}.
\label{eq:first-row-sparsity}
\end{equation}

We first examine the special term in \eqref{eq:alpha-unified}.
For $A^{n+1}_{0,n+1}A^n_{n,n-j+1}$
to be nonzero, \eqref{eq:first-row-sparsity} and
\eqref{eq:last-row-sparsity} require $n+1\equiv 0\pmod{k}$ and $n-j+1\equiv n\pmod{k}$, respectively. Thus this term can be nonzero only when $j\equiv 1\pmod{k}$ and $n\equiv k-1\pmod{k}$.

Next consider the \(s\)-th term in the sum in
\eqref{eq:alpha-unified}. If
\[A^n_{0,n-k+s}A^n_{n,n-j+s}\neq 0,
\]
then we have $n-k+s\equiv 0\pmod{k}$ and $n-j+s\equiv n\pmod{k}$. Therefore, it yields that $s\equiv j\pmod{k}$ and
$n+j\equiv 0\pmod{k}$. Since \(2\leq s\leq k\), there is at most one admissible value of
\(s\): it is \(s=k\) when \(r=0\), and \(s=r\) when
\(2\leq r<k\). When \(r=1\), only the special term can be nonzero.

Suppose first that \(r=0\), so that \(j=kq\). Then only the term
\(s=k\) in \eqref{eq:alpha-unified} can be nonzero. The column index
\(n-j+k\) must be nonnegative, and hence the possible indices are
\[
n=k(q+t-1),\qquad t\geq 0.
\]
Set
\[
m=q+t-1.
\]
Then we have $n=km$ and $n-j+k=kt$.

By Theorems~\ref{t3.1} and~\ref{t3.2} and the centrosymmetry of \(A^n\), we have
\[A^n_{0,n}=(m+1)^{k-1},\,\,\,\,A^n_{n,kt}=(m+1)^{k-1}(t+1),
\]

and
\[
D_n=(m+1)^k,
\qquad
D_{n+1}=(m+2)(m+1)^{k-1}.
\]
Therefore,
\[
\begin{aligned}
\alpha_{n,j}
&=
-\frac{A^n_{0,n}A^n_{n,kt}}{D_nD_{n+1}}=
-\frac{
(m+1)^{k-1}(m+1)^{k-1}(t+1)
}{
(m+1)^k(m+2)(m+1)^{k-1}
}\\
&=
-\frac{t+1}{(m+1)(m+2)}=
-\frac{t+1}{(q+t)(q+t+1)}.
\end{aligned}
\]
This proves the first assertion.

Now suppose that \(r=1\), so that
\[
j=kq+1.
\]
Only the special term in \eqref{eq:alpha-unified} can be nonzero.
The possible indices are
\[
n=k(q+t+1)-1,\qquad t\geq 0.
\]
Set
\[
m=q+t.
\]
Then we have $n=km+k-1$ and $n-j+1=kt+k-1$.

By Theorems~\ref{t3.1} and~\ref{t3.2}, we have $A^{n+1}_{0,n+1}=(m+2)^{k-1},\,\,\,\,A^n_{n,n-j+1}=(m+2)^{k-1}(t+1)$, and
\[
D_n=(m+1)(m+2)^{k-1},
\qquad
D_{n+1}=(m+2)^k.
\]
Hence
\[
\begin{aligned}
\alpha_{n,j}
&=
-\frac{
A^{n+1}_{0,n+1}A^n_{n,n-j+1}
}{
D_nD_{n+1}
}=
-\frac{
(m+2)^{k-1}(m+2)^{k-1}(t+1)
}{
(m+1)(m+2)^{k-1}(m+2)^k
}\\
&=
-\frac{t+1}{(m+1)(m+2)}=
-\frac{t+1}{(q+t+1)(q+t+2)}.
\end{aligned}
\]

Finally, suppose that \(2\leq r<k\). Only the term \(s=r\) in
\eqref{eq:alpha-unified} can be nonzero. The possible indices are
\[
n=k(q+t+1)-r,\qquad t\geq 0.
\]
Set
\[
m=q+t,
\qquad
\ell=k-r.
\]
Then we have $n=km+\ell,\,\,\,\, n-k+r=km$, and
\[
n-j+r=\ell+kt.
\]

By Theorem~\ref{t3.2} and the centrosymmetry of \(A^n\),
\[
A^n_{0,n-k+r}
=
(m+2)^{k-r}(m+1)^{r-1},
\]
and
\[
A^n_{n,n-j+r}
=
(m+2)^{k-r}(m+1)^{r-1}(t+1).
\]
Theorem~\ref{t3.1} gives
\[
D_n=(m+2)^{k-r}(m+1)^r,
\]
and
\[
D_{n+1}
=
(m+2)^{k-r+1}(m+1)^{r-1}.
\]
It follows that
\[
\begin{aligned}
\alpha_{n,j}
&=
-\frac{
A^n_{0,n-k+r}A^n_{n,n-j+r}
}{
D_nD_{n+1}
}=
-\frac{
(m+2)^{2k-2r}(m+1)^{2r-2}(t+1)
}{
(m+2)^{2k-2r+1}(m+1)^{2r-1}
}\\
&=
-\frac{t+1}{(m+1)(m+2)}=
-\frac{t+1}{(q+t+1)(q+t+2)}.
\end{aligned}
\]
The sparsity conditions show that all remaining values of
\(\alpha_{n,j}\) vanish.
\end{proof}

We can now compute the complete sequence of numerical invariants.

\begin{theorem}
\label{thm:block-repetition}
Let \(M_k=[z^k-w^k]\), where \(k\geq 2\). Then we have $\Sigma_0(M_k)=\frac{\pi^2}{6}$ and
\begin{equation}
\boxed{
\Sigma_j(M_k)
=
\Sigma_{\lceil j/k\rceil}([z-w]),
\qquad j\geq 1.
}
\label{eq:block-repetition}
\end{equation}
Equivalently, for every \(m\geq 1\),
\[
\Sigma_{km-k+1}(M_k)
=
\Sigma_{km-k+2}(M_k)
=
\cdots
=
\Sigma_{km}(M_k)
=
\Sigma_m([z-w]).
\]
In particular, the sequence
\(\{\Sigma_j(M_k)\}_{j\geq 0}\) is nonincreasing.
\end{theorem}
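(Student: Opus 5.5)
The plan has three parts: compute $\Sigma_0$ from Lemma~\ref{l3.2}, compute each $\Sigma_j$ for $j\ge1$ by summing squares of the values in Lemma~\ref{lem:alpha-support-values}, and read off monotonicity.

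\emph{Step 1: $\Sigma_0$.} By Remark~\ref{r3.2}, $A^n_{0,n}=0$ unless $n\equiv 0\pmod k$. For $n=km$ with $m\ge0$, Theorem~\ref{t3.1} gives $D_n=(m+1)^k$, and the cofactor formula gives $A^n_{0,n}=(m+1)^{k-1}$. Hence the only nonzero terms are
\[
\Sigma_0=\sum_{m\ge0}\frac{(m+1)^{2k-2}}{(m+1)^{2k}}=\sum_{m\ge1}\frac1{m^2}=\frac{\pi^2}{6}.
\]
The term $n=0$ is consistent with this, since $\phi_0=\psi_0$ contributes $1$.

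\emph{Step 2: $\Sigma_j$ for $j\ge1$.} By Lemma~\ref{l2.1} and the definition of the higher invariants, $\Sigma_j(M_k)=\sum_n|\alpha_{n,j}|^2$. Lemma~\ref{lem:alpha-support-values} gives the complete support and values of $\alpha_{n,j}$. Each admissible $t\ge0$ corresponds to exactly one $n$, so the index map is injective and there is no double counting. Write $j=kq+r$ with $0\le r<k$.
\begin{itemize}
\item If $r=0$, the lemma gives $\Sigma_j=\sum_{t\ge0}\frac{(t+1)^2}{(q+t)^2(q+t+1)^2}$, and here $q=j/k=\lceil j/k\rceil$.
\item If $1\le r<k$, the lemma gives $\Sigma_j=\sum_{t\ge0}\frac{(t+1)^2}{(q+1+t)^2(q+t+2)^2}$, and here $q+1=\lceil j/k\rceil$.
\end{itemize}
In both cases, putting $m'=\lceil j/k\rceil$ and comparing with the formula from \cite{Liu},
\[
\Sigma_{m'}([z-w])=\sum_{t\ge0}\frac{(t+1)^2}{(t+m')^2(t+m'+1)^2},
\]
yields $\Sigma_j(M_k)=\Sigma_{\lceil j/k\rceil}([z-w])$. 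The block form is then immediate, because $\lceil j/k\rceil=m$ exactly for $km-k+1\le j\le km$.

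\emph{Step 3: monotonicity.} Within each block the values are equal. So the sequence is nonincreasing once two facts are checked: $\Sigma_0(M_k)=\pi^2/6\ge\Sigma_1([z-w])$, and $m\mapsto\Sigma_m([z-w])$ is nonincreasing. For the first, $\Sigma_1([z-w])=\pi^2/6-1$, either from Yang's identity $\Sigma_0-\Sigma_1=1$ applied to $[z-w]$ or directly from the listed values. For the second, the summand $\frac{(t+1)^2}{(t+m)^2(t+m+1)^2}$ is strictly decreasing in $m$ for each fixed $t$, so the sequence is in fact strictly decreasing.

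I expect no real obstacle, since Lemma~\ref{lem:alpha-support-values} does the heavy lifting. The one point needing care is the index bookkeeping in Step 2: confirming that the two cases of the lemma both reduce to the same shift $\lceil j/k\rceil$, and that the support sets exhaust all $n$ with $\alpha_{n,j}\neq0$, as the lemma asserts.
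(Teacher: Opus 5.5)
Your proposal is correct and follows the paper's proof essentially step for step. You get $\Sigma_0$ from Lemma~\ref{l3.2} using the sparsity of $A^n_{0,n}$ and its values at $n=km$, and $\Sigma_j$ by summing the squares in Lemma~\ref{lem:alpha-support-values} in the cases $r=0$ and $1\le r<k$; monotonicity then follows from the termwise decrease of the $[z-w]$ summands together with $\pi^2/6>\pi^2/6-1$, where $\Sigma_1([z-w])=\sum_{t\ge0}(t+2)^{-2}$ can be read directly from the series without invoking the Hilbert--Schmidt identity.
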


\begin{proof}
By Lemma~\ref{l3.2},
\[
\Sigma_0(M_k)
=
\sum_{n=0}^{\infty}
\left|
\frac{A^n_{0,n}}{D_n}
\right|^2.
\]
The cofactor \(A^n_{0,n}\) vanishes unless
\[
n\equiv 0\pmod{k}.
\]
For \(n=km\), Theorems~\ref{t3.1} and~\ref{t3.2} give
\[
A^{km}_{0,km}=(m+1)^{k-1},
\qquad
D_{km}=(m+1)^k.
\]
Therefore, we have
\[
\Sigma_0(M_k)
=
\sum_{m=0}^{\infty}\frac{1}{(m+1)^2}
=
\frac{\pi^2}{6}.
\]

Let \(j\geq 1\), and write
\[
j=kq+r,\qquad 0\leq r<k.
\]
If \(r=0\), Lemma~\ref{lem:alpha-support-values} gives
\[
\begin{aligned}
\Sigma_j(M_k)=
\sum_{t=0}^{\infty}
\left|
\alpha_{k(q+t-1),j}
\right|^2=
\sum_{t=0}^{\infty}
\frac{(t+1)^2}
{(q+t)^2(q+t+1)^2}=
\Sigma_q([z-w]).
\end{aligned}
\]
Since
\[
\left\lceil\frac{j}{k}\right\rceil=q,
\]
this is
\[
\Sigma_j(M_k)
=
\Sigma_{\lceil j/k\rceil}([z-w]).
\]

If \(1\leq r<k\), Lemma~\ref{lem:alpha-support-values} gives
\[
\begin{aligned}
\Sigma_j(M_k)=
\sum_{t=0}^{\infty}
\left|
\alpha_{k(q+t+1)-r,j}
\right|^2=
\sum_{t=0}^{\infty}
\frac{(t+1)^2}
{(q+t+1)^2(q+t+2)^2}=
\Sigma_{q+1}([z-w]).
\end{aligned}
\]
Since
\[
\left\lceil\frac{j}{k}\right\rceil=q+1,
\]
formula \eqref{eq:block-repetition} follows.

Recall that
\[
\Sigma_m([z-w])
=
\sum_{t=0}^{\infty}
\frac{(t+1)^2}
{(m+t)^2(m+t+1)^2},
\qquad m\geq 1.
\]

For every fixed \(t\), the summand is strictly decreasing in \(m\).
Thus
\[
\{\Sigma_m([z-w])\}_{m\geq 1}
\]
is strictly decreasing. Therefore, formula \eqref{eq:block-repetition} 
shows that
\[
\{\Sigma_j(M_k)\}_{j\geq 0}
\]
is nonincreasing, with equality inside each block of length \(k\)
and strict decrease between consecutive blocks.

Moreover,
\[
\Sigma_0(M_k)=\frac{\pi^2}{6}
>
\frac{\pi^2}{6}-1
=
\Sigma_1(M_k).
\]
Therefore, the entire sequence
\(\{\Sigma_j(M_k)\}_{j\ge0}\) is nonincreasing.
\end{proof}

The block length in \eqref{eq:block-repetition} determines the
parameter \(k\). This gives a distinction between the higher
numerical invariants and the spectral data of the core operator.

\begin{corollary}
\label{cor:recover-k}
For \(k\geq 2\), the parameter \(k\) can be recovered from the
numerical invariant sequence of \(M_k\) by
\[
\boxed{
k
=
\max\left\{
j\geq 1:
\Sigma_j(M_k)=\Sigma_1(M_k)
\right\}.
}
\]
Consequently, if \(k\neq \ell\), then
\[
\{\Sigma_j(M_k)\}_{j\geq 0}
\neq
\{\Sigma_j(M_\ell)\}_{j\geq 0}.
\]
On the other hand, the nonzero eigenvalues of the corresponding core
operators, including multiplicities, are independent of \(k\).
Therefore, the higher numerical invariants detect information that is
not visible in the spectral data of the core operator.
\end{corollary}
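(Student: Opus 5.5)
The plan is to read everything off Theorem~\ref{thm:block-repetition} and Corollary~\ref{c3.1}; no new computation should be needed.

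\emph{Recovering $k$.} By \eqref{eq:block-repetition}, $\Sigma_j(M_k)=\Sigma_{\lceil j/k\rceil}([z-w])$ for $j\ge1$. For $1\le j\le k$ we have $\lceil j/k\rceil=1$, so $\Sigma_j(M_k)=\Sigma_1([z-w])=\Sigma_1(M_k)$. For $j\ge k+1$ we have $\lceil j/k\rceil\ge2$. The proof of Theorem~\ref{thm:block-repetition} shows that $\{\Sigma_m([z-w])\}_{m\ge1}$ is strictly decreasing, since each summand $(t+1)^2/((m+t)^2(m+t+1)^2)$ strictly decreases in $m$ and all series converge. Hence for $j\ge k+1$,
\[
\Sigma_j(M_k)\le \Sigma_2([z-w])<\Sigma_1([z-w])=\Sigma_1(M_k).
\]
So the set $\{j\ge1:\Sigma_j(M_k)=\Sigma_1(M_k)\}$ is exactly $\{1,\dots,k\}$, and its maximum is $k$.

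\emph{Distinguishing the sequences.} If $k\neq\ell$ and the two sequences were equal, the same maximum formula applied to each would give $k=\ell$, a contradiction.

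\emph{Spectral independence.} Corollary~\ref{c3.1} gives the nonzero eigenvalues of $C$ as $1$ and $\pm\frac{1}{m+1}$ for $m\ge1$, each of multiplicity one, and this list does not involve $k$. So the nonzero spectral data coincide for all $k\ge2$, while the invariant sequences differ. This establishes the final assertion.

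The only delicate point is the strict inequality $\Sigma_2([z-w])<\Sigma_1([z-w])$, and it follows from termwise strict comparison of positive convergent series. Alternatively, one can use the known values $\frac{\pi^2}{6}-1>\frac{5\pi^2}{6}-8$.
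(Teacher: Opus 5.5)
Your proof is correct and follows the paper's argument. You read off $\Sigma_1(M_k)=\cdots=\Sigma_k(M_k)=\Sigma_1([z-w])$ from Theorem~\ref{thm:block-repetition}, exclude $j\ge k+1$ by the strict decrease of $\{\Sigma_m([z-w])\}_{m\ge1}$, and cite Corollary~\ref{c3.1} for the $k$-independent nonzero spectrum. Your bound via $\lceil j/k\rceil\ge 2$ for every $j\ge k+1$ makes explicit a step the paper covers only by checking $j=k+1$ and relying implicitly on monotonicity.
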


\begin{proof}
By Theorem~\ref{thm:block-repetition},
\[
\Sigma_1(M_k)
=
\Sigma_2(M_k)
=
\cdots
=
\Sigma_k(M_k)
=
\Sigma_1([z-w]).
\]
On the other hand,
\[
\Sigma_{k+1}(M_k)
=
\Sigma_2([z-w])
<
\Sigma_1([z-w]).
\]
This proves the recovery formula and shows that distinct values of
\(k\) yield distinct numerical invariant sequences. On the other
hand, Corollary~\ref{c3.1} shows that the nonzero eigenvalues of the
corresponding core operators, including multiplicities, are
independent of \(k\).
\end{proof}

\begin{remark}
For \(k=2\), Theorem~\ref{thm:block-repetition} gives
\[
\Sigma_{2m-1}(M_2)
=
\Sigma_{2m}(M_2)
=
\Sigma_m([z-w]),
\qquad m\geq 1.
\]
Thus the separate computation for \([z^2-w^2]\) is recovered as the
first instance of the general block-repetition phenomenon. In
particular,
\[
\Sigma_1(M_2)
=
\Sigma_2(M_2)
=
\frac{\pi^2}{6}-1.
\]
\end{remark}

\medskip

\subsection*{Acknowledgment}
Y. Liu was supported by the Natural Science Foundation Project in Henan Province (No. 262300421861), the funding program for young backbone teachers in higher education institutions in Henan Province (No. 2024GGJS106), the key research projects of higher education institutions in Henan Province (No. 25B110009) and the general project cultivation fund of Nanyang Normal University (No. 2025PY034), Y. Lu was supported by NNSFC (Grant No. 12031002), C. Zu was supported by NNSFC (Grant No. 12401151), and the Postdoctoral Researcher Foundation of China  (Grant No. GZB20240100).

%\subsection*{Author Contribution Statement}
%All authors contributed equally to the research, analysis, and preparation of the manuscript. Each author approved the final version of the paper and agrees to be accountable for all aspects of the work.
%
\subsection*{Conflict of interest}
The authors declare that they have no conflict
of interest. 
\subsection*{Data availability statement}
No data, models, or code were generated or used for the research described in the article.%Data sharing is not applicable to this article as no new data were created or analyzed in this study.

\end{document}